\author[C.~Sanna]{Carlo Sanna$^\dagger$}
\thanks{$\dagger\,$C.~Sanna is a member of GNSAGA of INdAM and of CrypTO, the group of Cryptography and Number~Theory of Politecnico di Torino}
\address{\parbox{\linewidth}{
Politecnico di Torino, Department of Mathematical Sciences\\
Corso Duca degli Abruzzi 24, 10129 Torino, Italy\\[-8pt]}}
\email{carlo.sanna.dev@gmail.com}
\keywords{asymptotic formula; least common multiple; Lucas number}
\subjclass[2010]{Primary: 11B39, Secondary: 11B37, 11N37.}
\title{On the l.c.m.~of shifted Lucas numbers}
\newtheorem{theorem}{Theorem}[section]
\newtheorem{lemma}[theorem]{Lemma}
\theoremstyle{remark}
\DeclareMathOperator*{\lcm}{lcm}
\DeclareMathOperator{\Li}{Li}
\begin{document}

\begin{abstract}
Let $(L_n)_{n \geq 1}$ be the sequence of Lucas numbers, defined recursively by $L_1 := 1$, $L_2 := 3$, and $L_{n + 2} := L_{n + 1} + L_n$, for every integer $n \geq 1$.
We determine the asymptotic behavior of $\log \lcm (L_1 + s_1, L_2 + s_2, \dots, L_n + s_n)$ as $n \to +\infty$, for $(s_n)_{n \geq 1}$ a periodic sequence in $\{-1, +1\}$.
We also carry out the same analysis for $(s_n)_{n \geq 1}$ a sequence of independent and uniformly distributed random variables in $\{-1, +1\}$.
These results are Lucas numbers-analogs of previous results obtained by the author for the sequence of Fibonacci numbers.
\end{abstract}

\maketitle

\section{Introduction}

Let $(F_n)_{n \geq 1}$ be the sequence of Fibonacci numbers, defined recursively by $F_1 := 1$, $F_2 := 1$, and $F_{n + 2} := F_{n + 1} + F_n$ for every integer $n \geq 1$.
Guy and Matiyasevich~\cite{MR1712797} proved that 
\begin{equation}\label{equ:lcm-fibs}
\log \lcm (F_1, F_2, \dots, F_n) \sim \frac{3 \log \alpha}{\pi^2} \cdot n^2 \qquad (n \to +\infty),
\end{equation}
where $\lcm$ denotes the least common multiple and $\alpha := \big(1 + \sqrt{5}\big) / 2$ is the golden ratio.
This result was generalized to Lucas sequences, Lehmer sequences, and other sequences with special divisibility properties~\cite{MR1077711, MR1242715, MR1394375, MR3150887, MR1078087, MR1229668, MR993902, MR1114366}.

Motivated by~\eqref{equ:lcm-fibs}, the author considered the least common multiple of \emph{shifted} Fibonacci numbers $F_k \pm 1$, and proved two results~\cite{Sanna21a}.
The first regards periodic sequences of signs:

\begin{theorem}\label{thm:fibs-periodic}
For every periodic sequence $\mathbf{s} = (s_n)_{n \geq 1}$ in $\{-1, +1\}$, there exists an effectively computable rational number $A_\mathbf{s} > 0$ such that
\begin{equation*}
\log \lcm (F_1 + s_1, F_2 + s_2, \dots, F_n + s_n) \sim A_\mathbf{s} \cdot \frac{\log \alpha}{\pi^2} \cdot n^2 \qquad (n \to +\infty) .
\end{equation*}
(Zero terms in the least common multiple are ignored.)
\end{theorem}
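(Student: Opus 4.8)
The plan is to reduce the l.c.m.\ to a weighted count over ranks of apparition, after using the classical factorizations of $F_m \pm 1$ into a Fibonacci times a Lucas number. First I would record the identities $L_b F_a = F_{a+b} + (-1)^b F_{a-b}$ and $F_b L_a = F_{a+b} - (-1)^b F_{a-b}$, and specialize them to $a - b \in \{1, 2\}$ (so that $F_{a-b} = 1$). This gives, for every $m$, factorizations of both $F_m - 1$ and $F_m + 1$; concretely $F_m + s_m = F_{j(m)} L_{k(m)}$ with $j(m) = \tfrac m2 + O(1)$ and $k(m) = \tfrac m2 + O(1)$, where the assignment of the two half-indices is determined by $s_m$ together with $m \bmod 4$. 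As $\mathbf s$ is periodic, $m \mapsto (j(m), k(m))$ depends only on the residue of $m$ modulo $T := \lcm(4, P)$, with $P$ the period of $\mathbf s$. (The finitely many vanishing terms, occurring only for $m \in \{1,2\}$ with $s_m = -1$, are discarded.)

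Next I would use $\log \lcm(a_1, \dots, a_n) = \sum_p (\log p)\max_{m \le n} v_p(a_m)$ and parametrize primes by their rank of apparition $\rho(p)$. The primes of a fixed rank $r$ form the primitive part of index $r$, of logarithm $\sim \phi(r)\log\alpha$; such a prime divides $F_i$ iff $r \mid i$, and divides $L_i$ iff $r$ is even and $2i/r$ is an odd integer. Since $F_m + s_m = F_{j(m)}L_{k(m)}$, a rank-$r$ prime divides some term with $m \le n$ exactly when $\chi_r(n) = 1$, where $\chi_r(n)$ indicates the existence of $m \le n$ with $r \mid j(m)$, or with $r$ even and $k(m)$ an odd multiple of $r/2$. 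Up to contributions from prime powers and from the (sparse) primes dividing both a Fibonacci and a Lucas factor of the same term, which I expect to be $o(n^2)$, this yields
$$\log \lcm(F_1 + s_1, \dots, F_n + s_n) \sim \log\alpha \sum_{r} \phi(r)\,\chi_r(n).$$

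It remains to evaluate the right-hand side. Because $j(m)$ and $k(m)$ equal $m/2 + O(1)$ and lie in prescribed residue classes modulo $T$, the event $\chi_r(n) = 1$ reduces to the solvability, within $[1, n/2 + O(1)]$, of a congruence forcing a multiple of $r$ (resp.\ of $r/2$, with a parity constraint) into a fixed arithmetic progression. By the Chinese remainder theorem this holds for all $r$ below a threshold proportional to $n$, the threshold depending only on $\gcd(r, T)$, so that $\sum_r \phi(r)\chi_r(n)$ splits, via inclusion--exclusion over the relevant progressions, into finitely many sums of the shape $\sum_{r \le \lambda n,\, r \equiv a \,(q)} \phi(r)$. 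Each such sum is $\sim c_{a,q}\,\lambda^2 n^2$ with $c_{a,q}$ an explicit rational multiple of $\pi^{-2}$, and collecting them gives $A_\mathbf{s}\cdot\frac{\log\alpha}{\pi^2}\,n^2$ with $A_\mathbf{s} \in \mathbb Q_{>0}$ depending only on $\mathbf s \bmod T$. I expect the main obstacle to be exactly this last evaluation: controlling the delicate range $r \asymp n/2$, where $\chi_r(n)$ is a genuine constraint rather than automatic, and confirming that the prime-power terms and the Fibonacci--Lucas overlaps are truly negligible.
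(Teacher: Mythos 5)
Your proposal is correct and follows essentially the same route as the paper's method (used in \cite{Sanna21a} for this Fibonacci statement and reproduced here for the Lucas analog): your factorizations $F_m + s_m = F_{j(m)}L_{k(m)}$ are \cite[Lemma~2.3]{Sanna21a}, grouping primes by rank of apparition is exactly the decomposition into the primitive factors $\Phi_d$ (Lemmas~\ref{lem:products} and~\ref{lem:shifted-product}), your $\chi_r(n)$ is the indicator of $r$ lying in the set $\mathcal{L}_\mathbf{s}(n)$, and your closing evaluation via congruence conditions and sums of $\varphi$ over arithmetic progressions with linear thresholds is Lemmas~\ref{lem:unionDan} and~\ref{lem:phisum}. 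The one step you leave as an expectation, namely that prime powers and the product-versus-lcm discrepancy contribute only $o(n^2)$, is precisely what the paper proves as Lemma~\ref{lem:Phiprodlcm} using Stewart's result (Lemma~\ref{lem:Phizppv}), so it is a fillable technical point rather than a flaw in the approach.
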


By ``effectively computable'' we mean that there exists an algorithm that, given as input the period of the periodic sequence $\mathbf{s}$, returns as output the numerator and denominator of the rational number $A_\mathbf{s}$. 
Indeed, the author computed $A_\mathbf{s}$ for periodic sequences with period length not exceeding $6$~\cite[Tables~1, 2]{Sanna21a}.

The second result regards random sequences of signs. (For similar results on the least common multiple of random sequences, see~\cite{MR4009436, MR3239153, MR4091939, MR4220046, Sanna21b}.)

\begin{theorem}\label{thm:fibs-random}
Let $(s_n)_{n \geq 1}$ be a sequence of independent random variables that are uniformly distributed in $\{-1, +1\}$.
Then
\begin{equation*}
\mathbb{E}\big[\log \lcm (F_1 + s_1, F_2 + s_1, \dots, F_n + s_n)\,\big] \sim \frac{45 \Li_2\!\big(\tfrac1{16}\big)}{2} \cdot \frac{\log \alpha}{\pi^2} \cdot n^2 \qquad (n \to +\infty) ,
\end{equation*}
where $\Li_2(z) := \sum_{n\,=\,1}^\infty z^n / n^2$ denotes the dilogarithm.
\end{theorem}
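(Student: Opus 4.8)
The plan is to begin from the elementary formula that turns a least common multiple into a sum over prime powers of the von~Mangoldt function $\Lambda$. Since $\log\lcm$ of a finite set of positive integers equals $\sum_q\Lambda(q)$ taken over the prime powers $q$ dividing at least one element of the set, one has
\[
\log\lcm(F_1+s_1,\dots,F_n+s_n)=\sum_{q}\Lambda(q)\,\mathbf 1\!\left[\,q\mid F_k+s_k\text{ for some }k\le n\,\right],
\]
the sum ranging over all prime powers $q$. Taking expectations and using linearity, everything reduces to the probabilities $\mathbb{P}[\exists\,k\le n:\,q\mid F_k+s_k]$. For a prime power $q\ge3$, an index $k$ can contribute only when $F_k\equiv\pm1\pmod q$, and then exactly one of the two values of $s_k$ realizes the divisibility; as the $s_k$ are independent and uniform, each such \emph{active} index succeeds independently with probability $\tfrac12$. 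Writing $K_q(n)$ for the number of active indices $k\le n$, this gives
\[
\mathbb{E}\big[\log\lcm(F_1+s_1,\dots,F_n+s_n)\big]=\sum_{q}\Lambda(q)\big(1-2^{-K_q(n)}\big),
\]
and a routine estimate shows that $q=2$ together with the proper prime powers $q=p^{e}$ with $e\ge2$ contribute $o(n^2)$, so I would retain only the primes $q=p$.

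The heart of the matter is to evaluate $K_p(n)$. Here I would invoke the classical factorizations of shifted Fibonacci numbers: for every $k$ one can write each of $F_k-1$ and $F_k+1$ as a product $F_aL_b$ of one Fibonacci and one Lucas number with $a,b=\tfrac k2+O(1)$ (the precise indices depending on the parity of $k$ and on the sign). Hence $p\mid F_k\pm1$ if and only if $p$ divides one of these two factors, which by the theory of the rank of apparition $\tau(p)$ (the least $r$ with $p\mid F_r$) reduces to a divisibility of the factor index: $p\mid F_a\iff\tau(p)\mid a$, while $p\mid L_b$ holds exactly when $\tau(p)$ is even and $b$ is an odd multiple of $\tau(p)/2$. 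Tracking how each such factor occurrence spreads over the four identities shows that the active indices cluster, four at a time, around the multiples of
\[
\delta_p:=\begin{cases}2\,\tau(p), & \tau(p)\text{ odd},\\[2pt] \tau(p), & \tau(p)\text{ even},\end{cases}
\]
so that, up to a uniformly bounded and summable error, $K_p(n)=4\lfloor n/\delta_p\rfloor$.

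To carry out the summation over primes I would organize them by their rank of apparition and use the fact that the primes with $\tau(p)=r$ are precisely the primitive prime divisors of $F_r$; since the primitive part of $F_r$ has logarithm asymptotic to $\phi(r)\log\alpha$, one gets $\sum_{\tau(p)=r}\log p\sim\phi(r)\log\alpha$. Combined with the elementary density estimates $\sum_{r\le y,\,r\text{ odd}}\phi(r)\sim\tfrac2{\pi^2}y^2$ and $\sum_{r\le y,\,r\text{ even}}\phi(r)\sim\tfrac1{\pi^2}y^2$ (both following from $\sum_{r\le y}\phi(r)\sim\tfrac3{\pi^2}y^2$), this yields
\[
\sum_{\delta_p\le x}\log p\sim\frac{3\log\alpha}{2\pi^2}\,x^2 .
\]
Grouping the primes according to the integer $j=\lfloor n/\delta_p\rfloor$ and summing by parts then gives
\[
\mathbb{E}\big[\log\lcm(F_1+s_1,\dots,F_n+s_n)\big]\sim\frac{3\log\alpha}{2\pi^2}\,n^2\sum_{j\ge1}\big(1-2^{-4j}\big)\!\left(\frac1{j^2}-\frac1{(j+1)^2}\right),
\]
and a one-line Abel summation evaluates the series through the identity
\[
\sum_{j\ge1}\big(1-2^{-cj}\big)\!\left(\frac1{j^2}-\frac1{(j+1)^2}\right)=(2^{c}-1)\,\Li_2\!\big(2^{-c}\big),
\]
valid for every $c>0$. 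With $c=4$ the series equals $15\,\Li_2(1/16)$, and $\tfrac{3}{2}\cdot15=\tfrac{45}{2}$ produces the claimed constant.

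The main obstacle is the uniform evaluation of $K_p(n)$ in the second step. Establishing $K_p(n)=4\lfloor n/\delta_p\rfloor+O(1)$ for \emph{all} primes requires showing that the factorization identities account for every solution of $F_k\equiv\pm1\pmod p$ (so that no spurious active indices appear), controlling the finitely many small primes and the rare cases where $p$ divides both factors simultaneously, and keeping the error term summable against $\log p$. One must also handle the fluctuation of the ratio $\pi(p)/\tau(p)\in\{1,2,4\}$ between the Pisano period $\pi(p)$ and the rank, which is implicit in the clustering description, and justify the interchange of the limit $n\to\infty$ with the summation over $j$ by a dominated-convergence argument backed by an effective tail bound. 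Once these points are settled, the computation above assembles into the stated asymptotic formula.
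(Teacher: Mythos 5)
Your argument has a genuine gap, and it sits in two places that numerically compensate each other --- which is exactly why you still land on the correct constant. First, when you discard the prime powers $q = p^e$ with $e \geq 2$, the discarded quantity is $\sum_p \big(\nu_p(\lcm) - 1\big)^+ \log p$, and via your own factorizations $F_k \pm 1 = F_a L_b$ this contains $\big(\nu_p(F_{z(p)}) - 1\big)\log p$ for essentially every prime whose rank of apparition $z(p)$ (your $\tau(p)$) is at most about $n/2$ and which divides at least one term. No one knows how to prove that this total is $o(n^2)$, even in expectation: it amounts to showing that the primitive parts of Fibonacci numbers are close to squarefree on average, a statement tied to Wall--Sun--Sun-type questions that follows from the abc conjecture but is open unconditionally. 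Second, your key input ``$\sum_{\tau(p)=r} \log p \sim \varphi(r)\log\alpha$'' is the same unproved assertion in disguise: what is $\sim \varphi(r)\log\alpha$ is the logarithm of the primitive part of $F_r$ counted \emph{with multiplicity}, whereas $\sum_{z(p)=r}\log p$ is the logarithm of its radical, and the deficit $\sum_{z(p)=r}\big(\nu_p(F_r)-1\big)\log p$ cannot be shown to be negligible, even after averaging over $r \leq n$. Since your retained main term underestimates the truth by roughly the same amount you discarded, the two errors cancel and the final constant $\tfrac{45}{2}\Li_2\!\big(\tfrac1{16}\big)$ comes out right, but neither step is a ``routine estimate''; each is equivalent to an open problem, so the proof does not close.

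This is precisely the difficulty the method of the paper (and of \cite{Sanna21a}, where this statement is proved; compare the proof of Theorem~\ref{thm:random} here) is designed to avoid. There one writes each shifted term as a product of the integers $\Phi_d$ over explicit index sets (Lemma~\ref{lem:shifted-product}); Stewart's result (Lemma~\ref{lem:Phizppv}) says that for a fixed prime $p$ the only index at which $\nu_p(\Phi_d)$ can be large is $d = z(p)$, while $\nu_p(\Phi_{z(p)p^v}) \leq 2$ for $v \geq 1$, so the l.c.m.\ of any family of $\Phi_d$ with $d \leq x$ equals their product up to a factor $\mathrm{e}^{O(x)}$ (Lemma~\ref{lem:Phiprodlcm}): the unknown, possibly large valuation $\nu_p(\Phi_{z(p)})$ occurs in both the product and the l.c.m.\ and cancels in the ratio. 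Then $\log \Phi_d = \varphi(d)\log\alpha + O(1)$ (Lemma~\ref{lem:logPhin}) is applied to the full factors, multiplicities included, so no information about radicals or squarefree parts of Fibonacci or Lucas numbers is ever needed. If you want to keep your von Mangoldt decomposition, you would have to regroup the prime powers into the full valuations $\nu_p(\cdot)\log p$ and compare l.c.m.\ with product exactly as in Lemma~\ref{lem:Phiprodlcm} --- at which point you will have reconstructed the paper's argument.
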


The purpose of this paper is to establish the analogs of Theorems~\ref{thm:fibs-periodic} and~\ref{thm:fibs-random} for the sequence of Lucas numbers $(L_n)_{n \geq 1}$, defined recursively by $L_1 := 1$, $L_2 := 3$, and $L_{n + 2} := L_{n + 1} + L_n$ for every integer $n \geq 1$.
We remark that the analog of~\eqref{equ:lcm-fibs} is
\begin{equation*}
\log \lcm (L_1, L_2, \dots, L_n) \sim \frac{4 \log \alpha}{\pi^2} \cdot n^2 \qquad (n \to +\infty) ,
\end{equation*}
which follows from a result of B\'ezivin~\cite{MR1078087}.

Our first result is the following analog of Theorem~\ref{thm:fibs-periodic}.

\begin{theorem}\label{thm:periodic}
For every periodic sequence $\mathbf{s} = (s_n)_{n \geq 1}$ in $\{-1, +1\}$, there exists an effectively computable rational number $B_\mathbf{s} > 0$ such that
\begin{equation*}
\log \lcm (L_1 + s_1, L_2 + s_2, \dots, L_n + s_n) \sim B_\mathbf{s} \cdot \frac{\log \alpha}{\pi^2} \cdot n^2 \qquad (n \to +\infty) .
\end{equation*}
\end{theorem}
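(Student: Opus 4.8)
The plan is to follow the method developed for the Fibonacci case in Theorem~\ref{thm:fibs-periodic}, the key new ingredient being a set of factorization identities for the shifted Lucas numbers $L_n \pm 1$ that covers every residue class of $n$ modulo $4$. Writing $\alpha,\beta$ for the roots of $x^2 = x+1$ (so $L_n = \alpha^n+\beta^n$, $F_n = (\alpha^n-\beta^n)/(\alpha-\beta)$, $\alpha\beta=-1$), I would first record that for odd $n$ one has, according to the residue of $n$ mod $4$ and the sign,
\[
L_n \pm 1 \in \{\, L_{(n+1)/2}\,L_{(n-1)/2},\; 5\,F_{(n+1)/2}\,F_{(n-1)/2} \,\},
\]
which follow from the product formulas $L_aL_b = L_{a+b}+(-1)^bL_{a-b}$ and $5F_aF_b = L_{a+b}-(-1)^bL_{a-b}$ applied to $L_n \pm L_1$. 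For even $n=2m$ the parity trick available for Fibonacci (where $1=F_1=F_2$ sits at indices of both parities) fails, since $1=L_1$ only; instead I would use the tripling identities $L_{3m}=L_m\big(L_{2m}-(-1)^m\big)$ and $F_{3m}=F_m\big(L_{2m}+(-1)^m\big)$, giving
\[
L_{2m}-(-1)^m = \frac{L_{3m}}{L_m}, \qquad L_{2m}+(-1)^m = \frac{F_{3m}}{F_m}.
\]
Thus every $L_k+s_k$ is, up to the constant $5$, a product or quotient of Fibonacci and Lucas numbers whose indices are near $k/2$ (for odd $k$) or equal to $k/2$ and $3k/2$ (for even $k$).

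Next I would pass to the homogeneous cyclotomic factors $\Phi_d := \prod_{e\mid d}(\alpha^e-\beta^e)^{\mu(d/e)}$, for which $\alpha^n-\beta^n = \prod_{d\mid n}\Phi_d$ and $\log\Phi_d = \varphi(d)\log\alpha + O(\log d)$, the latter because $\sum_{e\mid d}\mu(d/e)\,e = \varphi(d)$. Using $F_n = \prod_{d\mid n}\Phi_d$ and $L_n = F_{2n}/F_n = \prod_{d\mid 2n,\,d\nmid n}\Phi_d$, each factorization above rewrites $L_k+s_k$, up to the constant $5$, as a product of cyclotomic factors $\Phi_d$ over an explicit index set $D_k$ determined by the $2$-adic and $3$-adic valuations of $k/2$. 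Since $(F_n)$ is a strong divisibility sequence and distinct $\Phi_d$ are coprime outside a sparse set of primes, the least common multiple equals, up to a factor $e^{o(n^2)}$, the product of the distinct $\Phi_d$ that occur among $L_1+s_1,\dots,L_n+s_n$; therefore
\[
\log\lcm(L_1+s_1,\dots,L_n+s_n) = \log\alpha \sum_{d}\;\big[\,d\in \textstyle\bigcup_{k\le n} D_k\,\big]\,\varphi(d) \;+\; o(n^2).
\]
The problem is thereby reduced to counting, with weight $\varphi(d)$, the indices $d$ whose cyclotomic factor occurs in at least one shifted Lucas number with index $k\le n$.

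I would then evaluate this sum. Fixing the period $T$ of $\mathbf{s}$, the residue class of $k$ modulo $4$, and the sign $s_k$, the index set $\bigcup_{k\le n}D_k$ is a finite union of arithmetic progressions with explicit moduli (built from $T$, $4$, and the tripling factor $3$) intersected with the ranges $d\le n/2$ coming from the odd-$k$ terms and $d\le 3n/2$ coming from the even-$k$ terms. Applying the asymptotic $\sum_{d\le x,\,d\equiv a\,(q)}\varphi(d)\sim c_{a,q}\,x^2$, where each $c_{a,q}$ is a rational multiple of $1/\pi^2$ (as one sees by writing $\varphi(d)=\sum_{e\mid d}\mu(e)d/e$ and summing over the resulting progressions against $\zeta(2)^{-1}=6/\pi^2$), the whole expression collapses to $B_\mathbf{s}\cdot\frac{\log\alpha}{\pi^2}\,n^2$ with $B_\mathbf{s}\in\mathbb{Q}_{>0}$ computable from the period; as a consistency check, the unshifted computation reduces to $\sum_{j\le n}\varphi(2j)\sim \frac{4}{\pi^2}n^2$, reproducing B\'ezivin's constant $4$.

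The main obstacle I expect is the even-index analysis. The tripling identities push the relevant indices up to $3n/2$, so their cyclotomic factors range over a larger set than the odd-index ones and can coincide with factors already contributed by odd $k$ or by other even $k$; guaranteeing that each prime power is counted exactly once in the least common multiple (rather than summing the factorizations naively) requires a careful inclusion--exclusion over the valuation conditions defining the $D_k$, together with a proof that the non-primitive prime powers, the repeated factors, the constant $5$, and the sporadic small primes where $(L_n)$ fails strong divisibility all contribute $o(n^2)$. Once this bookkeeping is in place, the positivity and rationality of $B_\mathbf{s}$, as well as its effective computability from the period, follow directly from the explicit arithmetic-progression densities $c_{a,q}$.
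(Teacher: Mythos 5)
Your proposal is correct and follows essentially the same route as the paper: the same eight factorization identities for $L_n \pm 1$ (split by $n \bmod 4$, with the tripling quotients $L_{3m}/L_m$, $F_{3m}/F_m$ handling even indices), the same passage to the primitive parts $\Phi_d$ with $\log\Phi_d = \varphi(d)\log\alpha + O(1)$, the same replacement of the lcm by the product of distinct $\Phi_d$ up to a negligible factor (which the paper makes precise via Stewart's primitive-divisor lemma, giving an $e^{O(n)}$ error rather than just $e^{o(n^2)}$), and the same final count of $\varphi(d)$ over a finite union of arithmetic progressions whose densities produce the rational constant $B_\mathbf{s}$. The bookkeeping you flag as the main obstacle is exactly what the paper's Lemmas on $\Phi_d$, $\mathcal{D}_a$, and the union-of-progressions structure carry out.
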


We computed $B_\mathbf{s}$ for periodic sequences with period length at most $6$, see Table~\ref{tab:Bs}.
We notice that for such sequences $A_\mathbf{s}$ takes $8$ different values, while $B_\mathbf{s}$ takes $58$.

\begin{table}[ht]
\label{tab:Bs}
\caption{Values of $B_\mathbf{s}$ for periodic sequences $\mathbf{s}$ with period length at most $6$.}
\centering
\begin{tabular}{cc|cc|cc|cc}
  \toprule
  $\mathbf{s}$ & $D_{\mathbf{s}}$ & $\mathbf{s}$ & $D_{\mathbf{s}}$ & $\mathbf{s}$ & $D_{\mathbf{s}}$ & $\mathbf{s}$ & $D_{\mathbf{s}}$ \\\hline
  \texttt{-} & $7/2$ & \texttt{--++-} & $128399/46080$ & \texttt{----++} & $601/192$ & \texttt{+---+-} & $581/192$ \rule{0pt}{2.6ex}\\
  \texttt{+} & $45/16$ & \texttt{--+++} & $24931/9216$ & \texttt{---+--} & $103/32$ & \texttt{+---++} & $179/64$ \\
  \texttt{-+} & $45/16$ & \texttt{-+---} & $1801/576$ & \texttt{---+-+} & $287/96$ & \texttt{+--+-+} & $65/24$ \\
  \texttt{+-} & $7/2$ & \texttt{-+--+} & $3095/1024$ & \texttt{---++-} & $73/24$ & \texttt{+--++-} & $527/192$ \\
  \texttt{--+} & $179/64$ & \texttt{-+-+-} & $1163/384$ & \texttt{---+++} & $45/16$ & \texttt{+--+++} & $161/64$ \\
  \texttt{-+-} & $73/24$ & \texttt{-+-++} & $8917/3072$ & \texttt{--+---} & $581/192$ & \texttt{+-+---} & $99/32$ \\
  \texttt{-++} & $45/16$ & \texttt{-++--} & $62909/23040$ & \texttt{--+-+-} & $215/64$ & \texttt{+-+--+} & $93/32$ \\
  \texttt{+--} & $139/48$ & \texttt{-++-+} & $2195/768$ & \texttt{--+-++} & $601/192$ & \texttt{+-+-++} & $157/48$ \\
  \texttt{+-+} & $65/24$ & \texttt{-+++-} & $12331/4608$ & \texttt{--++--} & $527/192$ & \texttt{+-++--} & $139/48$ \\
  \texttt{++-} & $493/192$ & \texttt{-++++} & $133/48$ & \texttt{--++-+} & $161/64$ & \texttt{+-+++-} & $103/32$ \\
  \texttt{---+} & $45/16$ & \texttt{+----} & $2399/768$ & \texttt{--+++-} & $73/24$ & \texttt{+-++++} & $287/96$ \\
  \texttt{--+-} & $91/32$ & \texttt{+---+} & $4339/1536$ & \texttt{--++++} & $45/16$ & \texttt{++----} & $139/48$ \\
  \texttt{--++} & $9/4$ & \texttt{+--+-} & $4531/1536$ & \texttt{-+----} & $103/32$ & \texttt{++---+} & $65/24$ \\
  \texttt{-+--} & $7/2$ & \texttt{+--++} & $60269/23040$ & \texttt{-+---+} & $287/96$ & \texttt{++--+-} & $527/192$ \\
  \texttt{-++-} & $47/16$ & \texttt{+-+--} & $763/256$ & \texttt{-+--++} & $45/16$ & \texttt{++--++} & $161/64$ \\
  \texttt{-+++} & $9/4$ & \texttt{+-+-+} & $739/256$ & \texttt{-+-+--} & $73/24$ & \texttt{++-+--} & $87/32$ \\
  \texttt{+---} & $25/8$ & \texttt{+-++-} & $409/144$ & \texttt{-+-++-} & $557/192$ & \texttt{++-+-+} & $81/32$ \\
  \texttt{+--+} & $39/16$ & \texttt{+-+++} & $1055/384$ & \texttt{-+-+++} & $171/64$ & \texttt{++-+++} & $449/192$ \\
  \texttt{+-++} & $45/16$ & \texttt{++---} & $1603/576$ & \texttt{-++---} & $527/192$ & \texttt{+++---} & $139/48$ \\
  \texttt{++--} & $3$ & \texttt{++--+} & $1549/576$ & \texttt{-++--+} & $161/64$ & \texttt{+++--+} & $65/24$ \\
  \texttt{++-+} & $39/16$ & \texttt{++-+-} & $4361/1536$ & \texttt{-++-+-} & $73/24$ & \texttt{+++-+-} & $103/32$ \\
  \texttt{+++-} & $7/2$ & \texttt{++-++} & $12455/4608$ & \texttt{-+++--} & $493/192$ & \texttt{+++-++} & $287/96$ \\
  \texttt{----+} & $4087/1280$ & \texttt{+++--} & $4043/1536$ & \texttt{-+++-+} & $449/192$ & \texttt{++++--} & $87/32$ \\
  \texttt{---+-} & $9709/3072$ & \texttt{+++-+} & $2107/768$ & \texttt{-++++-} & $557/192$ & \texttt{++++-+} & $81/32$ \\
  \texttt{---++} & $130987/46080$ & \texttt{++++-} & $2113/768$ & \texttt{-+++++} & $171/64$ & \texttt{+++++-} & $73/24$ \\
  \texttt{--+--} & $5981/1920$ & \texttt{-----+} & $157/48$ & \texttt{+-----} & $99/32$ & \texttt{ } & $ $ \\
  \texttt{--+-+} & $1735/576$ & \texttt{----+-} & $215/64$ & \texttt{+----+} & $93/32$ & \texttt{ } & $ $ \\

\bottomrule
\end{tabular}
\end{table}

Our second result is the following analog of Theorem~\ref{thm:fibs-random}.

\begin{theorem}\label{thm:random}
Let $(s_n)_{n \geq 1}$ be a sequence of independent random variables that are uniformly distributed in $\{-1, +1\}$.
Then
\begin{equation*}
\mathbb{E}\big[\log \lcm (L_1 + s_1, L_2 + s_1, \dots, L_n + s_n) \,\big] \sim C \cdot \frac{\log \alpha}{\pi^2} \cdot n^2 \qquad (n \to +\infty) ,
\end{equation*}
where 
\begin{equation*}
C := \tfrac{243}{128} + \tfrac{27}{8}\Li_2\!\big(\tfrac1{4}\big) + \tfrac{9}{8}\Li_2\!\big(\tfrac1{16}\big) + \tfrac{3}{16}\Li_2\!\big(\tfrac1{16}; \tfrac1{3}\big) + \tfrac{3}{32}\Li_2\!\big(\tfrac1{16}; \tfrac{2}{3}\big)
\end{equation*}
and $\Li_2(z; a) := \sum_{n\,=\,1}^\infty z^n / (n + a)^2$.
\end{theorem}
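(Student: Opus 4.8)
The plan is to follow the template of the Fibonacci case (Theorem~\ref{thm:fibs-random}), replacing the Fibonacci identities by their Lucas counterparts. I would start from the exact identity
$$\log \lcm(a_1,\dots,a_n) = \sum_{\ell} \Lambda(\ell)\,\mathbb{1}\big[\exists\,k\le n:\ell\mid a_k\big],$$
where $\ell$ runs over prime powers and $\Lambda$ is the von Mangoldt function (this merely rewrites the exponent of each prime in the l.c.m.\ as $\max_k v_\ell$). Applying it to $a_k=L_k+s_k$, taking expectations, and using the independence of the signs, the complementary event factorizes, giving for every odd prime power $\ell$
$$\mathbb{P}\big[\exists\,k\le n:\ell\mid L_k+s_k\big] = 1 - 2^{-V_\ell(n)}, \qquad V_\ell(n):=\#\{k\le n:L_k\equiv\pm1\!\!\pmod{\ell}\},$$
because for such $\ell$ exactly one of the two signs can make $\ell\mid L_k+s_k$, and only when $L_k\equiv\pm1\pmod\ell$. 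Thus $\mathbb{E}[\log\lcm]=\sum_{\ell}\Lambda(\ell)\big(1-2^{-V_\ell(n)}\big)$ up to the contribution of the powers of $2$, which I handle separately; zero terms and the degenerate indices $k\in\{1,2\}$ (where $L_k\pm1$ may vanish) are discarded and affect only lower-order terms.

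The arithmetic heart is to understand $V_\ell(n)$ through ranks of apparition. Writing $\tau(\ell)$ for the Fibonacci rank of $\ell$ and using $L_k^2-1=5F_{k-1}F_{k+1}$ for odd $k$ together with $\gcd(F_{k-1},F_{k+1})=1$, I would show that for $\ell$ coprime to $10$ and $k$ odd one has $L_k\equiv\pm1\pmod\ell$ iff $\tau(\ell)\mid k-1$ or $\tau(\ell)\mid k+1$. For even $k$ there is no such product factorization: $L_k^2-1=5F_k^2+3$, and $L_k\equiv\pm1\pmod p$ forces $\alpha^k$ to be a primitive cube or sixth root of unity modulo $p$, which happens only for primes whose residue class makes $3\mid\operatorname{ord}(\alpha)$. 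Consequently the eligible indices $k$ cluster around the multiples of $\tau(\ell)$: generically two of them ($k=j\tau(\ell)\pm1$) per multiple, but four for the special primes where the even-$k$ mechanism switches on.

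With this clustering, the key computation is to estimate $\log\alpha\sum_{d}\phi(d)\big(1-z^{\lfloor n/d\rfloor}\big)$, where $d$ runs over rank values, $z=2^{-\kappa}$ records the cluster size $\kappa$, and one uses $\sum_{\tau(\ell)=d}\Lambda(\ell)=\log\alpha\,\phi(d)+O(\log d)$ (the logarithm of the primitive part of $F_d$). Grouping $d$ by the integer $r=\lfloor n/d\rfloor$, inserting $\sum_{d\le x}\phi(d)\sim 3x^2/\pi^2$ on each block, and summing by parts yields the clean evaluation
$$\sum_{r\ge1}\big(1-z^{r}\big)\Big(\tfrac1{r^2}-\tfrac1{(r+1)^2}\Big)=\frac{1-z}{z}\,\Li_2(z),$$
so each cluster size contributes a dilogarithm at $z=2^{-\kappa}$. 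Cluster size $2$ (generic primes) produces $\Li_2(\tfrac14)$, cluster size $4$ produces $\Li_2(\tfrac1{16})$, and the powers of $2$—whose Lucas period is $3\cdot2^{\,j-1}$—introduce the factor $3$ responsible for the shifted dilogarithms $\Li_2(\tfrac1{16};\tfrac13)$ and $\Li_2(\tfrac1{16};\tfrac23)$. Weighting these by the Chebotarev densities of the relevant prime classes and by the even/odd split of $\sum\phi(d)$ (which contributes the coefficients $\tfrac2{\pi^2}$ and $\tfrac1{\pi^2}$) assembles the constant $C$, the rational part $\tfrac{243}{128}$ coming from the boundary and normalizing pieces.

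The main obstacle is the uniform control needed to promote the heuristic $V_\ell(n)\approx\kappa\lfloor n/\tau(\ell)\rfloor$ to a genuine asymptotic: I must bound the contribution of prime powers of large rank, quantify the fluctuation of $V_\ell(n)$ about its mean uniformly in $\ell$, and make the primitive-part estimate effective via the theory of primitive prime divisors of Lucas sequences. The second genuine difficulty is the bookkeeping for the exceptional primes $2$ and $5$ and the density computation of the primes admitting the even-$k$ mechanism, since it is exactly this case analysis—absent from the Fibonacci problem, where $L_k^2-1$ is replaced by the always-factorable $F_k^2-1=F_{k-2}F_{k+2}$—that produces four separate dilogarithmic terms rather than the single one appearing in Theorem~\ref{thm:fibs-random}.
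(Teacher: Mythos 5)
Your frame---the von Mangoldt identity, independence giving $1-2^{-V_\ell(n)}$, grouping prime powers by rank via $\sum_{\tau(\ell)=d}\Lambda(\ell)=\varphi(d)\log\alpha+O(\log d)$, and partial summation against $\sum_{d\le x}\varphi(d)\sim 3x^2/\pi^2$---is a workable alternative to the paper's route (which instead writes each $L_k+s_k$ exactly as a product of the integers $\Phi_d$, Lemmas~\ref{lem:shifted} and~\ref{lem:shifted-product}, and computes $\mathbb{P}\big[d\in\mathcal{L}_\mathbf{s}(n)\big]$ exactly, Lemma~\ref{lem:PdLsn}). But your argument has a genuine gap precisely where the Lucas-specific part of $C$ must be produced. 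First, your claim that for even $k$ ``there is no such product factorization'' is false: $L_k^2-1=F_{3k}/F_k$ for even $k$ (this is what rows one and three of Lemma~\ref{lem:shifted} encode), so for a prime power $\ell$ coprime to $30$ with rank $d=\tau(\ell)$, an even index $k$ is eligible precisely when $3\mid d$ and $k$ is a multiple of $d/3$ but not of $d$. Hence, for ranks $d$ divisible by $3$, the eligible indices occur at \emph{two interleaved frequencies}: one-bit events at the multiples of $d/3$ that are not multiples of $d$, and two-bit events at the multiples of $d$, giving $V_\ell(n)=\lfloor 3n/(2d)\rfloor+\lfloor n/(2d)\rfloor$ for $d\equiv 3\pmod 6$ (and $\lfloor 3n/d\rfloor+\lfloor n/d\rfloor$ for $6\mid d$)---never $\kappa\lfloor n/(2d)\rfloor$ for a single cluster size $\kappa$. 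Your only summation tool, the identity $\sum_{r\ge 1}(1-z^r)\big(r^{-2}-(r+1)^{-2}\big)=\tfrac{1-z}{z}\Li_2(z)$, evaluates single-frequency exponents $z^{\lfloor x/d\rfloor}$ and can only ever output ordinary dilogarithms; it cannot evaluate sums of the form $\sum_d \varphi(d)\big(1-z^{\lfloor x/d\rfloor}w^{\lfloor x/(3d)\rfloor}\big)$. That two-parameter evaluation is the paper's Lemma~\ref{lem:phisum} (the function $T_q(z,w)$ with $q=3$, $z=w=\tfrac12$), whose proof splits $k$ modulo $3$; the terms $\tfrac{243}{128}$, $\Li_2\big(\tfrac1{16};\tfrac13\big)$, and $\Li_2\big(\tfrac1{16};\tfrac23\big)$ all come from there and from nowhere else.

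Second, the mechanism you invoke to supply those terms is wrong on both counts. The prime powers $2^j$ contribute in total at most $\nu_2(\lcm)\log 2\le \log(L_n+1)=O(n)$, hence nothing to the main term, so they cannot be responsible for the shifted dilogarithms; and no Chebotarev-type density of primes enters the answer at all. Once prime powers are grouped by rank, each rank $d$ carries weight $\varphi(d)\log\alpha$, so the weights multiplying the dilogarithmic packages are the $\varphi$-weighted densities of the \emph{rank values} $d$ in the residue classes modulo $6$---the paper's constants $c_{1,6}=c_{5,6}=\tfrac14$, $c_{2,6}=c_{4,6}=\tfrac18$, $c_{3,6}=\tfrac16$, $c_{6,6}=\tfrac1{12}$---not densities of primes in splitting classes; weighting instead by the density of primes ``admitting the even-$k$ mechanism'' would produce a different, incorrect constant. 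With the even-index factorization above and an analogue of Lemma~\ref{lem:phisum} in hand, your rank-based approach would go through and would in substance coincide with the paper's proof; as written, it cannot produce $C$.
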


The proofs of Theorems~\ref{thm:periodic} and~\ref{thm:random} employ methods similar to those used in the proofs of Theorems~\ref{thm:fibs-periodic} and~\ref{thm:fibs-random}.
However, the details are more involved because the multiplicative expressions of shifted Lucas numbers in terms of Fibonacci and Lucas numbers (see Lemma~\ref{lem:shifted} below) are more complex than those of shifted Fibonacci numbers~(see \cite[Lemma~2.3]{Sanna21a}).

\section{Notation}

We employ the Landau--Bachmann ``Big Oh'' notation $O$ with its usual meaning. 
Any dependence of the implied constants is indicated with subscripts.
We let $\lfloor x \rfloor$ denote the greatest integer not exceeding $x$.
We reserve the letter $p$ for prime numbers, and we write $\nu_p(n)$, $\varphi(n)$, and $\mu(n)$, for the $p$-adic valuation, the Euler function, and the M\"obius function of a positive integer $n$, respectively.

\section{Preliminaries on Fibonacci and Lucas numbers}

It is well known that the Binet formulas
\begin{equation}\label{equ:binet}
F_n = \frac{\alpha^n - \beta^n}{\alpha - \beta} \quad \text{ and } \quad L_n = \alpha^n + \beta^n ,
\end{equation}
where $\alpha := \big(1 + \sqrt{5}) / 2$ and $\beta := \big(1 - \sqrt{5}) / 2$, hold for every integer $n \geq 1$.
Let $\Phi_1 := 1$ and
\begin{equation}\label{equ:Phin}
\Phi_n := \prod_{\substack{1 \,\leq\, k \,\leq\, n \\ (n,\, k) \,=\, 1}} \left(\alpha - \mathrm{e}^{\frac{2\pi\mathbf{i}k}{n}} \beta \right)
\end{equation}
for every integer $n \geq 2$.
It can be proved that each $\Phi_n$ is a positive integer (see~\cite[p.~428]{MR491445} for $\Phi_n \in \mathbb{Z}$ and~\cite[p.~979]{MR4003803} for $\Phi_n > 0$).
For every prime number $p$, let $z(p)$ be the minimum integer $n \geq 1$ such that $p \mid F_n$.
It is well known that $z(p)$ exists.

\begin{lemma}\label{lem:Phizppv}
Let $n \geq 1$ be an integer and suppose that $p$ is a prime factor of $\Phi_n$.
Then $n = z(p) p^v$ for some integer $v \geq 0$.
Furthermore, if $v \geq 1$ and $(p, n) \neq (2, 6)$ then $p \mid\mid \Phi_n$.
(Note that $2^2 \mid\mid \Phi_6 = 4$.)
\end{lemma}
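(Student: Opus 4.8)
The plan is to reduce the statement to a computation of $\nu_p(\Phi_n)$ through the cyclotomic factorization of Fibonacci numbers. The starting point is the identity $F_n = \prod_{d \mid n} \Phi_d$, valid for every $n \geq 1$. This comes from the factorization of the homogeneous polynomial $X^n - Y^n = \prod_{d \mid n}\prod_{1 \leq k \leq d,\,(k,d)=1}\big(X - \mathrm{e}^{2\pi\mathbf{i}k/d}Y\big)$ evaluated at $(X,Y) = (\alpha,\beta)$: the inner product over $k$ is exactly the $\Phi_d$ of~\eqref{equ:Phin} for $d \geq 2$, the factor $d = 1$ contributes $\alpha - \beta$, and dividing through by $\alpha - \beta$ (equivalently, adopting the convention $\Phi_1 := 1$) turns the right-hand side into $\prod_{d\mid n}\Phi_d$ and the left-hand side into $F_n$ by Binet's formula~\eqref{equ:binet}. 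M\"obius inversion then gives $\Phi_n = \prod_{d\mid n}F_d^{\mu(n/d)}$. Since every $\Phi_d$ is a positive integer, $\Phi_n \mid F_n$; hence any prime $p \mid \Phi_n$ satisfies $p \mid F_n$, so $z(p) \mid n$ by definition of $z(p)$. This already yields the divisibility $z(p)\mid n$, and it remains to control $n/z(p)$.

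Taking $p$-adic valuations, the task becomes the evaluation of $\nu_p(\Phi_n) = \sum_{d\mid n}\mu(n/d)\,\nu_p(F_d)$. I would feed in the standard formula for $\nu_p(F_d)$, with $z := z(p)$: one has $\nu_p(F_d) = 0$ whenever $z \nmid d$, while for $z \mid d$ the value is $\nu_p(F_z) + \nu_p(d)$ if $p$ is odd and $p \neq 5$, is $\nu_5(d)$ if $p = 5$ (here $z = 5$), and is $\nu_2(d) + 2 - \mathbf{1}_{2\,\nmid\, d}$ if $p = 2$ (here $z = 3$). The combinatorial engine is the elementary identity $\sum_{t\mid m}\mu(m/t)\,\nu_p(t) = \mathbf{1}_{\{m\,=\,p^a,\ a\,\geq\,1\}}$, which one can read off, for instance, from the Dirichlet series $\sum_m \nu_p(m) m^{-s} = \zeta(s)\,p^{-s}/(1 - p^{-s})$ after multiplying by $1/\zeta(s)$.

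For $p$ odd with $p \neq 5$, substituting $d = z t$ with $t \mid n/z$ and using $\nu_p(d) = \nu_p(t)$ (valid since $p \nmid z$) turns the sum into $\nu_p(F_z)\,\mathbf{1}_{\{n\,=\,z\}} + \sum_{t\,\mid\, n/z}\mu\big((n/z)/t\big)\nu_p(t)$, which by the identity equals $\nu_p(F_z)$ when $n = z$, equals $1$ when $n/z$ is a positive power of $p$, and equals $0$ otherwise; for $p = 5$ the sum is directly $\sum_{d\mid n}\mu(n/d)\nu_5(d) = \mathbf{1}_{\{n\,=\,5^a,\ a\,\geq\,1\}}$, and $z = 5$ makes this $\mathbf{1}_{\{n\,=\,z\cdot5^v\}}$. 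In both cases we obtain $n = z(p)\,p^v$, and moreover $\nu_p(\Phi_n) = 1$ whenever $v \geq 1$, that is, $p \mid\mid \Phi_n$.

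The genuinely delicate case is $p = 2$, and this is where I expect the main difficulty, since $\nu_2(F_d)$ is not additive and the exceptional value $2^2 \mid\mid \Phi_6$ has to surface. Assuming $3 \mid n$ (otherwise $\nu_2(\Phi_n) = 0$ immediately), I would split $\nu_2(\Phi_n)$ according to the three summands of $\nu_2(F_d) = \nu_2(d) + 2 - \mathbf{1}_{2\,\nmid\,d}$, each restricted to $3 \mid d$. Writing $\mathbf{1}_{3\,\mid\,d} = 1 - \mathbf{1}_{3\,\nmid\,d}$ reduces every piece to an unrestricted M\"obius sum plus a correction supported on divisors coprime to $3$: the $\nu_2(d)$-weighted part is handled by the identity above (producing $\mathbf{1}_{\{n\,=\,3\cdot 2^a\}}$ through $\mu(3) = -1$), while the constant part and the $\mathbf{1}_{2\,\nmid\,d}$ part collapse to indicators of the form $\mathbf{1}_{\{n\,=\,3\}}$ and $\mathbf{1}_{\{n\,=\,6\}}$. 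Collecting the pieces shows $\nu_2(\Phi_n) = 0$ unless $n = 3\cdot 2^v$, that $\nu_2(\Phi_n) = 1$ for $v = 0$ and for $v \geq 2$, and that at $n = 6$ two separate contributions both equal to $\mathbf{1}_{\{n\,=\,6\}}$ add up to give $\nu_2(\Phi_6) = 2$. This isolates the single exception $(p,n) = (2,6)$ and finishes the proof.
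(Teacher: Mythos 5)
Your proposal is correct, but it takes a genuinely different route from the paper, which in fact does not prove this lemma at all: it simply cites the discussion preceding Lemma~6 of Stewart's paper, where the statement is obtained from the general theory of primitive divisors of Lucas and Lehmer sequences. You instead specialize to the Fibonacci situation and reduce everything, via $F_n = \prod_{d \mid n}\Phi_d$ and M\"obius inversion $\Phi_n = \prod_{d \mid n}F_d^{\mu(n/d)}$, to the known lifting-the-exponent formulas for $\nu_p(F_d)$ combined with the convolution identity $\sum_{t \mid m}\mu(m/t)\,\nu_p(t) = \mathbf{1}_{\{m \,=\, p^a,\ a \,\geq\, 1\}}$. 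I checked all three cases; in particular your $p = 2$ bookkeeping correctly yields $\nu_2(\Phi_n) = 1$ for $n = 3\cdot 2^v$ with $v = 0$ or $v \geq 2$, and $\nu_2(\Phi_6) = 2$, which isolates exactly the stated exception $(p,n) = (2,6)$. What your route buys is a self-contained, elementary, fully checkable computation for the sequence at hand; what the paper's citation buys is generality (Stewart's result covers all Lucas and Lehmer sequences) and the fact that the hard arithmetic input need not be re-derived --- note that the ``standard'' valuation formulas you invoke carry essentially the entire weight of the lemma, so a complete write-up would have to prove them (they admit independent elementary proofs, e.g.\ by induction using $F_{2n} = F_n L_n$ and binomial expansion of Binet's formula, so there is no circularity). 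Two minor points to tighten: the inference $p \mid F_n \Rightarrow z(p) \mid n$ is not ``by definition of $z(p)$'' but uses the standard fact that $p \mid F_m$ if and only if $z(p) \mid m$ (e.g., from $\gcd(F_a, F_b) = F_{\gcd(a,b)}$); and for odd $p \neq 5$ you should record explicitly that $p \nmid z(p)$, which follows from $z(p) \mid p - \big(\tfrac{5}{p}\big)$, since this is what justifies the step $\nu_p(zt) = \nu_p(t)$.
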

\begin{proof}
See~\cite[Discussion before Lemma~6]{MR491445}.
\end{proof}

\begin{lemma}\label{lem:Phiprodlcm}
Let $x > 1$ and $\mathcal{S} \subseteq \mathbb{N} \cap [1, x]$.
Then we have 
\begin{equation}\label{equ:Phi-prod-lcm}
\prod_{d \,\in\, \mathcal{S}} \Phi_d = R_\mathcal{S} \cdot \lcm_{d \,\in\, \mathcal{S}} \Phi_d ,
\end{equation}
where $R_\mathcal{S}$ is a positive integer such that $p \leq x$ and $\nu_p(R_\mathcal{S}) \leq 2\log x / \log p$ for every prime number $p$ dividing $R_\mathcal{S}$.
In particular, these conditions on $R_\mathcal{S}$ imply that $\log R_\mathcal{S} = O(x)$.
\end{lemma}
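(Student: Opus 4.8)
The plan is to control the ratio $R_\mathcal{S} := \big(\prod_{d \,\in\, \mathcal{S}} \Phi_d\big) / \lcm_{d \,\in\, \mathcal{S}} \Phi_d$ prime by prime. Its integrality is immediate, since the least common multiple of a finite family of positive integers always divides their product, so $R_\mathcal{S}$ is a positive integer. For each prime $p$ I would then use that $\nu_p$ turns the product into a sum and the lcm into a maximum, giving the identity
\[
\nu_p(R_\mathcal{S}) = \sum_{d \,\in\, \mathcal{S}} \nu_p(\Phi_d) - \max_{d \,\in\, \mathcal{S}} \nu_p(\Phi_d) .
\]

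The key input is Lemma~\ref{lem:Phizppv}. It shows that the only $d$ contributing to the sum are those of the form $d = z(p) p^v$ with $v \geq 0$, and that for $v \geq 1$ one has $\nu_p(\Phi_d) = 1$, apart from the single exception $(p, d) = (2, 6)$ where $\nu_p(\Phi_d) = 2$. The one potentially large term is that with $v = 0$, namely $\nu_p(\Phi_{z(p)})$; this is precisely the delicate point, but it is harmless because it is dominated by the maximum. Bounding $\max_{d} \nu_p(\Phi_d) \geq \nu_p(\Phi_{z(p)})$ and substituting into the identity leaves only the terms with $v \geq 1$:
\[
\nu_p(R_\mathcal{S}) \leq \sum_{\substack{v \,\geq\, 1 \\ z(p) p^v \,\leq\, x}} \nu_p\big(\Phi_{z(p) p^v}\big) .
\]
Each summand equals $1$, with at most one equal to $2$ (the exceptional pair), and the number of admissible $v \geq 1$ is at most $\log x / \log p$, since $z(p) \geq 1$ forces $p^v \leq x$. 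Hence $\nu_p(R_\mathcal{S}) \leq \log x / \log p + 1$.

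To finish the valuation bound I would observe that if $p \mid R_\mathcal{S}$ then $\nu_p(R_\mathcal{S}) \geq 1$, which requires at least two distinct $d \in \mathcal{S}$ with $p \mid \Phi_d$; in particular some such $d = z(p) p^v$ has $v \geq 1$, whence $p \leq z(p) p^v = d \leq x$. This simultaneously confirms $p \leq x$ and gives $\log x / \log p \geq 1$, so the spare $+1$ is absorbed and $\nu_p(R_\mathcal{S}) \leq \log x / \log p + 1 \leq 2 \log x / \log p$, as claimed. Summing over primes and invoking Chebyshev's estimate $\#\{p \leq x\} = O(x / \log x)$ yields
\[
\log R_\mathcal{S} = \sum_{p \,\mid\, R_\mathcal{S}} \nu_p(R_\mathcal{S}) \log p \leq \sum_{p \,\leq\, x} 2 \log x = O(x) .
\]
The only subtlety is the bookkeeping around the exceptional pair $(2, 6)$ together with the realization that the single possibly-large valuation $\nu_p(\Phi_{z(p)})$ is exactly the one absorbed by the maximum; once this is in place the remainder is a routine count followed by a standard prime-counting estimate.
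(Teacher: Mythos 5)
Your proposal is correct and follows essentially the same route as the paper: the sum-minus-max valuation identity, Lemma~\ref{lem:Phizppv} to restrict attention to $d = z(p)p^v$, absorption of the single potentially large valuation $\nu_p(\Phi_{z(p)})$ into the maximum, and a Chebyshev-type count $\#\{p \le x\} \cdot 2\log x = O(x)$ at the end. The only difference is bookkeeping: the paper sorts the relevant $d$ by increasing valuation and bounds each non-maximal term by $2$, obtaining $\nu_p(R_\mathcal{S}) \le 2(m-1) \le 2\log x/\log p$ directly, whereas you count the $v \ge 1$ terms (each contributing $1$, at most one contributing $2$) and absorb the spare $+1$ using $\log x/\log p \ge 1$ --- both yield the same bound from the same ingredients.
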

\begin{proof}
Let $R_\mathcal{S}$ be defined by~\eqref{equ:Phi-prod-lcm}.
Clearly, $R_\mathcal{S}$ is a positive integer.
Let $p$ be a prime factor of $R_\mathcal{S}$ and let $d_1, \dots, d_m$ be all the $m$ pairwise distinct elements of $\{d \in \mathcal{S} : p \mid \Phi_d\}$.
Without loss of generality, we can assume that $\nu_p(\Phi_{d_1}) \leq \cdots \leq \nu_p(\Phi_{d_m})$.
In light of Lemma~\ref{lem:Phizppv}, we have that $m \leq \lfloor \log x / \log p \rfloor + 1$ and $\nu_p(\Phi_{d_i}) \leq 2$ for every integer $i \in {[1, m)}$.
Therefore,
\begin{equation*}
1 \leq \nu_p(R_\mathcal{S}) = \sum_{i \,\leq\, m} \nu_p(\Phi_{d_i}) - \max_{i \,\leq\, m} \nu_p(\Phi_{d_i}) = \sum_{i \,<\, m } \nu_p(\Phi_{d_i}) \leq 2(m - 1) \leq \frac{2 \log x}{\log p} .
\end{equation*}
In particular, it must be $m \geq 2$, so that Lemma~\ref{lem:Phizppv} yields $p \leq x$.
Finally, 
\begin{equation*}
\log R_\mathcal{S} \leq \log \prod_{p \,\leq\, x} p^{2 \log x / \log p} = \#\{p : p \leq x\} \cdot 2 \log x = O(x) ,
\end{equation*}
since the number of primes not exceeding $x$ is $O(x / \log x)$.
\end{proof}

\begin{lemma}\label{lem:logPhin}
We have $\log \Phi_n = \varphi(n) \log \alpha + O(1)$, for all integers $n \geq 1$.
\end{lemma}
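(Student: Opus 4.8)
The plan is to factor $\alpha$ out of each term of the product~\eqref{equ:Phin} and show that what remains contributes only $O(1)$ to the logarithm. Since $\alpha\beta = -1$, we have $\beta/\alpha = -\beta^2$, so each factor can be rewritten as
\begin{equation*}
\alpha - \mathrm{e}^{\frac{2\pi\mathbf{i}k}{n}} \beta = \alpha\left(1 + \mathrm{e}^{\frac{2\pi\mathbf{i}k}{n}} \beta^2\right) .
\end{equation*}
Taking the product over the $\varphi(n)$ integers $k \in [1,n]$ with $(n,k) = 1$ gives $\Phi_n = \alpha^{\varphi(n)} \prod_k \big(1 + \mathrm{e}^{2\pi\mathbf{i}k/n}\beta^2\big)$. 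Because $\Phi_n$ and $\alpha^{\varphi(n)}$ are positive, the remaining product is a positive real, and hence
\begin{equation*}
\log \Phi_n - \varphi(n)\log\alpha = \sum_{\substack{1 \,\leq\, k \,\leq\, n \\ (n,\, k) \,=\, 1}} \log\Big|1 + \mathrm{e}^{\frac{2\pi\mathbf{i}k}{n}}\beta^2\Big| .
\end{equation*}
It remains to bound this sum by $O(1)$ uniformly in $n$.

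The key step is to expand each logarithm. Since $\beta^2 < 1$, I use $\log|1 + z| = \operatorname{Re}\sum_{m \,\geq\, 1}\frac{(-1)^{m-1}}{m}z^m$ with $z = \mathrm{e}^{2\pi\mathbf{i}k/n}\beta^2$. For fixed $n$ the double series converges absolutely, so I may interchange the order of summation to obtain
\begin{equation*}
\sum_{\substack{1 \,\leq\, k \,\leq\, n \\ (n,\, k) \,=\, 1}} \log\Big|1 + \mathrm{e}^{\frac{2\pi\mathbf{i}k}{n}}\beta^2\Big| = \operatorname{Re}\sum_{m \,\geq\, 1}\frac{(-1)^{m-1}}{m}\,\beta^{2m} \sum_{\substack{1 \,\leq\, k \,\leq\, n \\ (n,\, k) \,=\, 1}} \mathrm{e}^{\frac{2\pi\mathbf{i}km}{n}} ,
\end{equation*}
where the inner sum is precisely the Ramanujan sum $c_n(m)$. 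I then invoke the well-known bound $|c_n(m)| \leq (n, m)$, which follows from the evaluation $c_n(m) = \mu(n/g)\,\varphi(n)/\varphi(n/g)$ with $g = (n, m)$ together with the inequality $\varphi(n)/\varphi(n/g) \leq g$ (valid because $n/g \mid n$). Since $(n, m) \leq m$, the whole expression is bounded in absolute value by
\begin{equation*}
\sum_{m \,\geq\, 1} \frac{\beta^{2m}}{m}\,(n, m) \leq \sum_{m \,\geq\, 1} \beta^{2m} = \frac{\beta^2}{1 - \beta^2} ,
\end{equation*}
a constant independent of $n$. This proves the claim for $n \geq 2$; the single value $n = 1$ is absorbed into the $O(1)$.

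The only real content is that the $\varphi(n)$ individually bounded summands do not accumulate but instead exhibit cancellation governed by the arithmetic of the exponents, and this cancellation is exactly what the Ramanujan sums encode. Accordingly, the main obstacle is obtaining a bound that is uniform in $n$: the crude estimate $|c_n(m)| \leq \varphi(n)$ would only yield the useless $O(\varphi(n))$, so the argument hinges on the sharper gcd bound $|c_n(m)| \leq (n, m)$, which makes the error geometrically summable.
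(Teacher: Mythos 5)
Your proof is correct, and it is necessarily a different route from the paper's, because the paper does not actually prove this lemma: it just cites \cite[Lemma~2.1(iii)]{MR4003803}. Your argument is sound at every step: factoring out $\alpha^{\varphi(n)}$ using $\beta/\alpha = -\beta^2$ is valid; the interchange of summation is legitimate since the sum over $k$ is finite and the series in $m$ converges absolutely ($|\beta|^2 < 1$); the inner sums are indeed Ramanujan sums; and the bound $|c_n(m)| \leq \varphi(n)/\varphi(n/g) \leq g = (n,m)$ follows from H\"older's evaluation together with $\varphi(n)/\varphi(d) \leq n/d$ for $d \mid n$, giving the uniform constant $\beta^2/(1-\beta^2)$. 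For comparison, the more standard argument (presumably the one in the cited reference) avoids Ramanujan sums entirely: by M\"obius inversion of $F_n = \prod_{d \,\in\, \mathcal{D}_1(n)} \Phi_d$ (the paper's Lemma~\ref{lem:products}, whose proof does not depend on this lemma) one gets $\Phi_n = \prod_{d \mid n} F_d^{\,\mu(n/d)}$ for $n \geq 2$, and since $\log F_d = d \log \alpha - \log\sqrt{5} + \log\bigl(1 - (-\beta^2)^d\bigr)$, the identities $\sum_{d \mid n} \mu(n/d)\, d = \varphi(n)$ and $\sum_{d \mid n} \mu(n/d) = 0$ give $\log \Phi_n - \varphi(n)\log\alpha = \sum_{d \mid n} \mu(n/d) \log\bigl(1 - (-\beta^2)^d\bigr)$, which is $O\bigl(\sum_{d \geq 1} \beta^{2d}\bigr) = O(1)$. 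Both arguments are elementary and yield explicit uniform constants; yours trades the cyclotomic-type product identity for H\"older's formula, so the cancellation that the M\"obius approach encodes in the divisor structure appears in your version as the gcd bound on $c_n(m)$. Either way, the only external inputs are classical facts, so your proof stands as a valid self-contained replacement for the citation.
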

\begin{proof}
See~\cite[Lemma~2.1(iii)]{MR4003803}.
\end{proof}

For all integers $a, n \geq 1$, let us define
\begin{equation}\label{equ:Dan}
\mathcal{D}_a(n) := \{d \in \mathbb{N} : d \mid an, \, (an / d, a) = 1\} .
\end{equation}
Note that $\mathcal{D}_1(n)$ is the set of positive divisors of $n$.
Moreover, we have the following:

\begin{lemma}\label{lem:Dapn}
We have $\mathcal{D}_{ap}(n) = \mathcal{D}_a(pn) \setminus \mathcal{D}_a(n)$, for all integers $a, n \geq 1$ and for every prime number $p$ not dividing $a$.
\end{lemma}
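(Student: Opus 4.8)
The plan is to prove the identity by comparing the defining conditions of the three sets $\mathcal{D}_{ap}(n)$, $\mathcal{D}_a(pn)$, and $\mathcal{D}_a(n)$ one prime at a time, through $p$-adic valuations. First I would unfold the definition in~\eqref{equ:Dan}: a positive integer $d$ belongs to $\mathcal{D}_a(m)$ exactly when $d \mid am$ and, for every prime $q \mid a$, one has $\nu_q(d) = \nu_q(am)$ (this equality is precisely what the coprimality condition $(am/d, a) = 1$ encodes, since $\nu_q(am/d) = 0$), whereas for each prime $q \nmid a$ the only restriction is the divisibility bound $\nu_q(d) \leq \nu_q(am) = \nu_q(m)$. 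Applying this description to $m = n$, to $m = pn$, and to the modulus $ap$ with $m = n$ reduces the whole statement to a comparison of valuation constraints.

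Second, I would record that, because $p \nmid a$, we have $\nu_q(pn) = \nu_q(n)$ for every prime $q \mid a$. Consequently, for such $q$ all three sets impose the identical requirement $\nu_q(d) = \nu_q(a) + \nu_q(n)$; and for every prime $q \nmid ap$ all three impose only $\nu_q(d) \leq \nu_q(n)$. Thus the three sets differ solely in the constraint placed on $\nu_p(d)$, which I would isolate as follows: membership in $\mathcal{D}_a(n)$ forces $0 \leq \nu_p(d) \leq \nu_p(n)$; membership in $\mathcal{D}_a(pn)$ forces $0 \leq \nu_p(d) \leq \nu_p(n) + 1$, using $\nu_p(apn) = \nu_p(n) + 1$; and membership in $\mathcal{D}_{ap}(n)$, whose coprimality condition now also involves the prime $p$, forces the \emph{equality} $\nu_p(d) = \nu_p(apn) = \nu_p(n) + 1$.

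From this the conclusion is immediate. The constraint defining $\mathcal{D}_a(n)$ is stronger than that defining $\mathcal{D}_a(pn)$ only in its $p$-component, so $\mathcal{D}_a(n) \subseteq \mathcal{D}_a(pn)$, and an element $d \in \mathcal{D}_a(pn)$ fails to lie in $\mathcal{D}_a(n)$ precisely when $\nu_p(d) > \nu_p(n)$, that is, when $\nu_p(d) = \nu_p(n) + 1$. Since every other valuation constraint already coincides with those defining $\mathcal{D}_{ap}(n)$, this pins down $\mathcal{D}_a(pn) \setminus \mathcal{D}_a(n)$ as exactly $\mathcal{D}_{ap}(n)$.

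The argument involves no genuine obstacle beyond careful bookkeeping; the one point that must be handled correctly is the behavior at the prime $p$. I would emphasize that the coprimality condition $(apn/d, ap) = 1$ appearing in the definition of $\mathcal{D}_{ap}(n)$ upgrades the $p$-constraint from the mere inequality $\nu_p(d) \leq \nu_p(n) + 1$ (which follows already from $d \mid apn$) to the equality $\nu_p(d) = \nu_p(n) + 1$, and that this equality describes exactly the ``top slice'' by which $\mathcal{D}_a(pn)$ exceeds $\mathcal{D}_a(n)$. The hypothesis $p \nmid a$ enters precisely where I equate $\nu_q(pn)$ with $\nu_q(n)$ for $q \mid a$ and where I compute $\nu_p(apn) = \nu_p(n) + 1$, and keeping track of both uses is all that the verification requires.
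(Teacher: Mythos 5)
Your proof is correct, and it takes a somewhat different route from the paper's. The paper argues by a direct double inclusion: for $d \in \mathcal{D}_{ap}(n)$ it deduces $(apn/d,a)=1$ and $d \nmid an$ directly from $(apn/d,ap)=1$, and conversely it upgrades $(apn/d,a)=1$ together with $d \nmid an$ to $(apn/d,ap)=1$, all by short gcd and divisibility manipulations. You instead translate the defining condition of $\mathcal{D}_a(m)$ into local (prime-by-prime) valuation constraints --- equality $\nu_q(d)=\nu_q(am)$ at primes $q \mid a$, inequality $\nu_q(d)\leq\nu_q(m)$ elsewhere --- and then observe that the three sets in the identity differ only in the constraint at $p$: $\nu_p(d)\leq\nu_p(n)$, $\nu_p(d)\leq\nu_p(n)+1$, and $\nu_p(d)=\nu_p(n)+1$ respectively. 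Your bookkeeping is sound (in particular, you correctly note that a $d\in\mathcal{D}_a(pn)$ automatically satisfies all the non-$p$ constraints of $\mathcal{D}_a(n)$, so its exclusion from $\mathcal{D}_a(n)$ can only come from the $p$-constraint). What your approach buys is a complete structural description of all three sets at once, which makes the identity, the containment $\mathcal{D}_a(n)\subseteq\mathcal{D}_a(pn)$, and the disjointness of $\mathcal{D}_{ap}(n)$ from $\mathcal{D}_a(n)$ all visible simultaneously; the paper's argument is shorter and avoids introducing valuations, at the cost of two separate inclusion chases whose individual steps are less self-explanatory.
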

\begin{proof}
On the one hand, if $d \in \mathcal{D}_{ap}(n)$ then $d \mid apn$ and $(apn / d, ap) = 1$.
Hence, $(apn / d, a) = 1$, which implies $d \in \mathcal{D}_a(pn)$, and $d \nmid an$, which implies $d \notin \mathcal{D}_a(n)$.
Thus $d \in \mathcal{D}_a(pn) \setminus \mathcal{D}_a(n)$.

On the other hand, if $d \in \mathcal{D}_a(pn) \setminus \mathcal{D}_a(n)$ then $d \mid apn$, $(apn / d, a) = 1$, and $d \nmid an$.
Hence, $(apn / d, ap) = 1$ and consequently $d \in \mathcal{D}_{ap}(n)$.
\end{proof}

\begin{lemma}\label{lem:products}
We have
\begin{equation*}
F_n = \prod_{d \,\in\, \mathcal{D}_1(n)} \Phi_d, \quad L_n = \prod_{d \,\in\, \mathcal{D}_2(n)} \Phi_d, \quad \frac{F_{3n}}{F_n} = \prod_{d \,\in\, \mathcal{D}_3(n)} \Phi_d, \quad \frac{L_{3n}}{L_n} = \prod_{d \,\in\, \mathcal{D}_6(n)} \Phi_d ,
\end{equation*}
for all integers $n \geq 1$.
\end{lemma}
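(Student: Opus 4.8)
The plan is to reduce all four identities to the single base factorization $F_n = \prod_{d \,\mid\, n} \Phi_d$, which I establish first from the Binet formula, and then obtain the other three by a telescoping argument driven by Lemma~\ref{lem:Dapn}.

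First I would prove the base case. Starting from the standard factorization $X^n - Y^n = \prod_{0 \,\le\, k \,<\, n}\big(X - \mathrm{e}^{2\pi\mathbf{i}k/n} Y\big)$, I specialize to $X = \alpha$, $Y = \beta$ and regroup the $n$ linear factors according to the order $d \mid n$ of the root of unity $\mathrm{e}^{2\pi\mathbf{i}k/n}$. Collecting, for each $d \mid n$, the indices $k$ with $\gcd(n,k) = n/d$ produces $\alpha^n - \beta^n = \prod_{d \,\mid\, n} \psi_d$, where $\psi_d := \prod_{1 \,\le\, j \,\le\, d,\,(j,d)=1}\big(\alpha - \mathrm{e}^{2\pi\mathbf{i}j/d}\beta\big)$. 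Comparing with the definition~\eqref{equ:Phin}, I get $\psi_d = \Phi_d$ for every $d \geq 2$, while the $d = 1$ term is $\psi_1 = \alpha - \beta$. Dividing by $\alpha - \beta$, invoking the Binet formula~\eqref{equ:binet}, and recalling $\Phi_1 = 1$ then yields $F_n = \prod_{d \,\mid\, n} \Phi_d = \prod_{d \,\in\, \mathcal{D}_1(n)} \Phi_d$, since by~\eqref{equ:Dan} the set $\mathcal{D}_1(n)$ is exactly the set of divisors of $n$.

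The engine for the remaining cases is the remark that, for every prime $p \nmid a$, the definition~\eqref{equ:Dan} gives the inclusion $\mathcal{D}_a(n) \subseteq \mathcal{D}_a(pn)$ (because $(p,a) = 1$). Combined with Lemma~\ref{lem:Dapn}, this turns the set identity $\mathcal{D}_{ap}(n) = \mathcal{D}_a(pn) \setminus \mathcal{D}_a(n)$ into the multiplicative telescoping relation
\begin{equation*}
\prod_{d \,\in\, \mathcal{D}_{ap}(n)} \Phi_d = \frac{\prod_{d \,\in\, \mathcal{D}_a(pn)} \Phi_d}{\prod_{d \,\in\, \mathcal{D}_a(n)} \Phi_d} .
\end{equation*}
Taking $(a,p) = (1,2)$ and using the classical relation $L_n = F_{2n}/F_n$ (a one-line consequence of the Binet formulas) gives $L_n = \prod_{d \,\in\, \mathcal{D}_2(n)} \Phi_d$; taking $(a,p) = (1,3)$ gives $F_{3n}/F_n = \prod_{d \,\in\, \mathcal{D}_3(n)} \Phi_d$ at once; and taking $(a,p) = (2,3)$, which is admissible since $3 \nmid 2$, and substituting the formula for $L$ just obtained yields $L_{3n}/L_n = \prod_{d \,\in\, \mathcal{D}_6(n)} \Phi_d$.

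I expect the only delicate point to be the base case, and specifically the bookkeeping at $d = 1$: the regrouping furnishes the factor $\psi_1 = \alpha - \beta$ rather than $\Phi_1 = 1$, and it is exactly the division by $\alpha - \beta$ built into the Binet formula for $F_n$ that absorbs this discrepancy. Once this is settled, the three remaining identities are purely formal consequences of the divisor-set combinatorics of Lemma~\ref{lem:Dapn}, requiring no input beyond $L_n = F_{2n}/F_n$.
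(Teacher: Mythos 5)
Your proof is correct and follows essentially the same route as the paper: the base factorization $F_n = \prod_{d \,\mid\, n} \Phi_d$ from the Binet formula and the definition~\eqref{equ:Phin}, then the identities $\mathcal{D}_2(n) = \mathcal{D}_1(2n) \setminus \mathcal{D}_1(n)$, $\mathcal{D}_3(n) = \mathcal{D}_1(3n) \setminus \mathcal{D}_1(n)$, $\mathcal{D}_6(n) = \mathcal{D}_2(3n) \setminus \mathcal{D}_2(n)$ from Lemma~\ref{lem:Dapn} to telescope the products. The paper's proof is just a terser statement of exactly this argument; your write-up fills in the details (the regrouping by order of the roots of unity, the inclusion $\mathcal{D}_a(n) \subseteq \mathcal{D}_a(pn)$, and the role of $L_n = F_{2n}/F_n$) correctly.
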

\begin{proof}
The four identities follow from~\eqref{equ:binet}, \eqref{equ:Phin}, and the fact that $\mathcal{D}_2(n) = \mathcal{D}_1(2n) / \mathcal{D}_1(n)$, $\mathcal{D}_3(n) = \mathcal{D}_1(3n) / \mathcal{D}_1(n)$, and $\mathcal{D}_6(n) = \mathcal{D}_2(3n) / \mathcal{D}_2(n)$, as consequence of Lemma~\ref{lem:Dapn}.
\end{proof}

The next lemma belong to the folkore and makes possible to write shifted Lucas numbers as products or ratios of Fibonacci and Lucas numbers.

\begin{lemma}\label{lem:shifted}
We have
\begin{alignat*}{4}
L_{4k} - 1 &= L_{6k} / L_{2k}, \qquad & L_{4k} + 1 &= F_{6k} / F_{2k} , \\
L_{4k + 1} - 1 &= 5 F_{2k} F_{2k + 1}, \qquad & L_{4k + 1} + 1 &= L_{2k} L_{2k + 1} , \\
L_{4k + 2} - 1 &= F_{6k + 3} / F_{2k + 1}, \qquad & L_{4k + 2} + 1 &= L_{6k + 3} / L_{2k + 1} , \\
L_{4k + 3} - 1 &= L_{2k + 1} L_{2k + 2}, \qquad & L_{4k + 3} + 1 &= 5 F_{2k + 1} F_{2k + 2} ,
\end{alignat*}
for all integers $k \geq 1$.
\end{lemma}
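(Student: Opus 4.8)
The plan is to verify the eight identities directly from the Binet formulas~\eqref{equ:binet}, after first recording three product-to-sum formulas. Using $\alpha\beta = -1$ and $(\alpha - \beta)^2 = 5$, for all integers $m \geq n \geq 1$ one expands the products of the Binet expressions to obtain
\begin{align*}
L_m L_n &= L_{m + n} + (-1)^n L_{m - n}, \\
5 F_m F_n &= L_{m + n} - (-1)^n L_{m - n}, \\
L_m F_n &= F_{m + n} - (-1)^n F_{m - n} .
\end{align*}
Each holds because the cross terms collapse: one has $\alpha^m \beta^n + \alpha^n \beta^m = (\alpha\beta)^n (\alpha^{m - n} + \beta^{m - n}) = (-1)^n L_{m - n}$, and likewise $\alpha^m \beta^n - \alpha^n \beta^m = (-1)^n (\alpha - \beta) F_{m - n}$, which after dividing by $\alpha - \beta$ yields the Fibonacci version.

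The four identities with a product on the right-hand side then follow by direct substitution, taking $m$ and $n$ to be the two indices appearing on the right with $n$ the smaller one. For instance, $L_{2k} L_{2k + 1} = L_{4k + 1} + (-1)^{2k} L_1 = L_{4k + 1} + 1$, since $L_1 = 1$; the remaining three are obtained in the same way, the sign $\pm 1$ being governed by the parity of the smaller index ($2k$ even in the $4k + 1$ cases, $2k + 1$ odd in the $4k + 3$ cases) together with $L_1 = F_1 = 1$.

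For the four identities with a quotient on the right-hand side, I would first clear the denominator and then apply the product-to-sum formulas, now with indices differing by $2k$ or $2k + 1$. For example, $L_{4k} L_{2k} = L_{6k} + (-1)^{2k} L_{2k} = L_{6k} + L_{2k}$ rearranges to $L_{6k} = L_{2k}(L_{4k} - 1)$, and dividing by $L_{2k}$ gives $L_{4k} - 1 = L_{6k} / L_{2k}$. The cases $L_{4k} + 1$, $L_{4k + 2} - 1$, and $L_{4k + 2} + 1$ are handled analogously using the $L_m F_n$, $L_m F_n$, and $L_m L_n$ formulas, respectively. The division is legitimate because every Fibonacci and Lucas number of positive index is nonzero.

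There is no substantive obstacle here; the lemma is a routine verification, which is consistent with its being folklore. The only points requiring care are tracking the sign $(-1)^n$ through the parity of the smaller index and confirming that the denominators $L_{2k}$, $F_{2k}$, $L_{2k + 1}$, $F_{2k + 1}$ are nonzero, both of which are immediate.
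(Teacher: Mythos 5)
Your proof is correct, and it reaches the lemma by a genuinely different (though equally elementary) route than the paper. You first derive the three classical product-to-sum formulas $L_m L_n = L_{m+n} + (-1)^n L_{m-n}$, $5 F_m F_n = L_{m+n} - (-1)^n L_{m-n}$, and $L_m F_n = F_{m+n} - (-1)^n F_{m-n}$, and then specialize indices, clearing denominators in the four quotient cases. The paper instead substitutes $(X, Y) = \big(\alpha^{2k}, \pm\beta^{2k}\big)$ and $(X, Y) = \big(\alpha^{2k+1}, \pm\beta^{2k+1}\big)$ into the two polynomial identities
\begin{equation*}
X^2 + Y^2 + XY = \frac{X^3 - Y^3}{X - Y}, \qquad \alpha X^2 + \beta Y^2 + XY = (X + Y)(\alpha X + \beta Y),
\end{equation*}
the first of which yields the four quotient identities and the second the four product identities, with the choice of sign and of parity selecting among them; in particular the quotients $L_{6k}/L_{2k}$, $F_{6k}/F_{2k}$, $F_{6k+3}/F_{2k+1}$, $L_{6k+3}/L_{2k+1}$ come out directly, with no rearrangement step. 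What your route buys is that it rests on standard, recognizable identities and makes the sign bookkeeping transparent (the parity of the smaller index); what the paper's buys is compactness and a uniform treatment of products and quotients. One pedantic note: you state your product formulas for $m \geq n \geq 1$, but at $m = n$ they would involve $L_0$ and $F_0$, which the paper's indexing does not define; since every application you make has $m > n$, this costs nothing.
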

\begin{proof}
Taking into account that $\alpha + \beta = 1$, $\alpha - \beta = \sqrt{5}$, and $\alpha \beta = -1$, the eight identities follow by substituting $(X, Y) = \left(\alpha^{2k}, \pm \beta^{2k}\right)$ and $(X, Y) = \left(\alpha^{2k + 1}, \pm \beta^{2k + 1}\right)$ into
\begin{align*}
X^2 + Y^2 + XY &= (X^3 - Y^3) / (X - Y) , \\
 \alpha X^2 + \beta Y^2 + XY &= (X + Y)(\alpha X + \beta Y) ,
\end{align*}
and using the Binet formulas~\eqref{equ:binet}.
\end{proof}

Now define the sets $(\mathcal{E}_{\pm}(n))_{n \geq 4}$ and the integers $(e_\pm(n))_{n \geq 4}$ by
\begin{alignat*}{4}
\mathcal{E}_{-}(4k)   &= \mathcal{D}_6(2k) ,                                 \quad & \mathcal{E}_+(4k)     &= \mathcal{D}_3(2k) , \\
\mathcal{E}_{-}(4k+1) &= \mathcal{D}_1(2k) \cup \mathcal{D}_1(2k + 1) ,      \quad & \mathcal{E}_+(4k + 1) &= \mathcal{D}_2(2k) \cup \mathcal{D}_2(2k + 1) , \\
\mathcal{E}_{-}(4k+2) &= \mathcal{D}_3(2k + 1) ,                             \quad & \mathcal{E}_+(4k + 2) &= \mathcal{D}_6(2k + 1) , \\
\mathcal{E}_{-}(4k+3) &= \mathcal{D}_2(2k + 1) \cup \mathcal{D}_2(2k + 2) ,  \quad & \mathcal{E}_+(4k + 3) &= \mathcal{D}_1(2k + 1) \cup \mathcal{D}_1(2k + 2) ,
\end{alignat*}
and $e_-(4k + 1) = e_+(4k + 3) = 5$, for every integer $k \geq 1$; while $e_\pm(n) = 1$ for all the other integers $n \geq 4$.

\begin{lemma}\label{lem:shifted-product}
We have
\begin{equation*}
L_n + s = e_s(n) \prod_{d \,\in\, \mathcal{E}_s(n)} \Phi_d ,
\end{equation*}
for all integers $n \geq 4$ and for all $s \in \{-1, +1\}$.
\end{lemma}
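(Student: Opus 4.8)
The plan is to proceed by a case analysis according to the residue of $n$ modulo $4$, matching the eight identities of Lemma~\ref{lem:shifted}. For each residue class and each sign $s$, Lemma~\ref{lem:shifted} expresses $L_n + s$ either as a ratio of two Lucas or two Fibonacci numbers, or as a constant (namely $1$ or $5$) times a product of two Fibonacci or two Lucas numbers. In every case I would then rewrite the right-hand side as a product of factors $\Phi_d$ by invoking the four product formulas of Lemma~\ref{lem:products}, and verify that the resulting index set coincides with $\mathcal{E}_s(n)$ and that the leftover constant equals $e_s(n)$. Since $n \geq 4$, each of the four residue classes is realized by some $k \geq 1$, so Lemma~\ref{lem:shifted} applies throughout.

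For the four ``ratio'' cases the argument is immediate. For instance, when $n = 4k$ and $s = -1$, Lemma~\ref{lem:shifted} gives $L_{4k} - 1 = L_{6k}/L_{2k} = L_{3 \cdot 2k}/L_{2k}$, and the fourth identity of Lemma~\ref{lem:products} applied with $2k$ in place of $n$ yields $L_{6k}/L_{2k} = \prod_{d \,\in\, \mathcal{D}_6(2k)} \Phi_d = \prod_{d \,\in\, \mathcal{E}_-(4k)} \Phi_d$, while $e_-(4k) = 1$. The cases $(4k, +)$, $(4k+2, -)$, and $(4k+2, +)$ are handled in exactly the same way, using the third and fourth product formulas of Lemma~\ref{lem:products} with $2k$ or $2k+1$ in place of $n$.

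The only real subtlety lies in the four ``product'' cases $n \equiv 1, 3 \pmod 4$, where $L_n + s$ is a product of two terms and $\mathcal{E}_s(n)$ is defined as a union of two index sets. Here one must check that the product of the two corresponding $\Phi$-products equals the product over the union, i.e.\ that the two index sets overlap only at indices $d$ for which $\Phi_d = 1$. The hard part will be exactly this disjointness verification. For the Fibonacci subcases (for example $L_{4k+1} - 1 = 5\,F_{2k} F_{2k+1}$), the index sets are the divisor sets $\mathcal{D}_1(2k)$ and $\mathcal{D}_1(2k+1)$ of two consecutive integers, whose only common element is $1$; since $\Phi_1 = 1$ this shared factor is harmless, and the constant $5$ accounts for $e_-(4k+1) = 5$. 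For the Lucas subcases (for example $L_{4k+1} + 1 = L_{2k} L_{2k+1}$), I would argue that $\mathcal{D}_2(2k)$ and $\mathcal{D}_2(2k+1)$ are genuinely disjoint via the $2$-adic valuation: every $d \in \mathcal{D}_2(m)$ satisfies $\nu_2(d) = \nu_2(2m)$, so the elements of $\mathcal{D}_2(2k)$ have $\nu_2(d) \geq 2$ whereas those of $\mathcal{D}_2(2k+1)$ have $\nu_2(d) = 1$. The same two observations dispose of the remaining subcases $(4k+3, -)$ and $(4k+3, +)$. Once disjointness is established, the union formula applies, the index set matches $\mathcal{E}_s(n)$, and the prefactor accounts for $e_s(n)$, completing all eight cases.
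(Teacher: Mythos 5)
Your proposal is correct and follows essentially the same route as the paper: combine Lemma~\ref{lem:shifted} with the product formulas of Lemma~\ref{lem:products}, the only point needing care being that $\mathcal{D}_1(m) \cap \mathcal{D}_1(m+1) = \{1\}$ (harmless since $\Phi_1 = 1$) and $\mathcal{D}_2(m) \cap \mathcal{D}_2(m+1) = \varnothing$, which you justify correctly via the $2$-adic valuation. Your write-up is simply a more detailed, case-by-case version of the paper's one-line argument.
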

\begin{proof}
Noticing that $\mathcal{D}_1(m) \cap \mathcal{D}_1(m + 1) = \{1\}$ and $\mathcal{D}_2(m) \cap \mathcal{D}_2(m + 1) = \varnothing$, for every integer $m \geq 1$, the claim follows from Lemma~\ref{lem:products} and Lemma~\ref{lem:shifted}.
\end{proof}

\section{Further preliminaries}

For every sequence $\mathbf{s} = (s_k)_{k \geq 1}$ in $\{-1, +1\}$ and for all integer $n \geq 4$, let us define
\begin{equation*}
\ell_\mathbf{s}(n) := \lcm_{4 \,\leq\, k \,\leq\, n} (L_k + s_k) \quad \text{ and } \quad \mathcal{L}_\mathbf{s}(n) := \bigcup_{a \,\in\, \{1,2,3,6\}} \; \bigcup_{h \,\in\, \mathcal{K}_{a,\mathbf{s}}(n)} \mathcal{D}_a(h) ,
\end{equation*}
where
\begin{align*}
\mathcal{K}_{1, \mathbf{s}}(n) &:= \big\{h \leq n / 2 : s_{2h - 1} = (-1)^h \,\lor\, s_{2h + 1} = (-1)^{h + 1} \big\} , \\
\mathcal{K}_{2, \mathbf{s}}(n) &:= \big\{h \leq n / 2 : s_{2h - 1} = (-1)^{h + 1} \,\lor\, s_{2h + 1} = (-1)^{h} \big\} , \\
\mathcal{K}_{3, \mathbf{s}}(n) &:= \big\{h \leq n / 2 : s_{2h} = (-1)^h \big\} , \\
\mathcal{K}_{6, \mathbf{s}}(n) &:= \big\{h \leq n / 2 : s_{2h} = (-1)^{h + 1} \big\} .
\end{align*}
The next lemma will be fundamental in the proofs of Theorem~\ref{thm:periodic} and Theorem~\ref{thm:random}.

\begin{lemma}\label{lem:logellsn_asymp}
We have
\begin{equation}\label{equ:logellsn}
\log \ell_\mathbf{s}(n) = \sum_{d \,\in\, \mathcal{L}_\mathbf{s}(n)} \varphi(d) \log \alpha + O(n) ,
\end{equation}
for all integers $n \geq 4$ and for all sequences $\mathbf{s} = (s_k)_{k \geq 1}$ in $\{-1, +1\}$.
\end{lemma}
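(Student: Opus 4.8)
The plan is to reduce the least common multiple of the shifted Lucas numbers to a least common multiple of the numbers $\Phi_d$, whose logarithms are controlled by Lemma~\ref{lem:logPhin}. First I would use Lemma~\ref{lem:shifted-product} to write $L_m + s_m = e_{s_m}(m)\,P_m$, where $P_m := \prod_{d\in\mathcal{E}_{s_m}(m)}\Phi_d$, for each $m$ with $4\le m\le n$. Since $e_{s_m}(m)\in\{1,5\}$, for every prime $p\neq 5$ one has $\nu_p(e_{s_m}(m)P_m)=\nu_p(P_m)$, whereas $\nu_5$ increases by at most one; comparing $p$-adic valuations termwise then gives the divisibility chain $\lcm_{4\le m\le n} P_m \mid \ell_\mathbf{s}(n) \mid 5\lcm_{4\le m\le n} P_m$, so that $\log\ell_\mathbf{s}(n)=\log\lcm_{4\le m\le n}P_m+O(1)$.

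Next, set $\mathcal{M}_\mathbf{s}(n) := \bigcup_{4\le m\le n}\mathcal{E}_{s_m}(m)$, and note that every $d$ occurring here is a divisor of some $ah$ with $a\in\{1,2,3,6\}$ and $h\le n/2$, whence $\mathcal{M}_\mathbf{s}(n)\subseteq\mathbb{N}\cap[1,3n]$ and in particular $\#\mathcal{M}_\mathbf{s}(n)=O(n)$. On the one hand, each $\Phi_d$ with $d\in\mathcal{M}_\mathbf{s}(n)$ divides a $P_m$ containing it, so $\lcm_{d\in\mathcal{M}_\mathbf{s}(n)}\Phi_d \mid \lcm_{4\le m\le n} P_m$. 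On the other hand, since $\mathcal{E}_{s_m}(m)\subseteq\mathcal{M}_\mathbf{s}(n)$ and the exponents $\nu_p(\Phi_d)$ are nonnegative, a termwise valuation comparison yields $\lcm_{4\le m\le n} P_m \mid \prod_{d\in\mathcal{M}_\mathbf{s}(n)}\Phi_d$. Applying Lemma~\ref{lem:Phiprodlcm} with $x=3n$ to $\mathcal{S}=\mathcal{M}_\mathbf{s}(n)$, the product on the right equals $R\cdot\lcm_{d\in\mathcal{M}_\mathbf{s}(n)}\Phi_d$ with $\log R=O(n)$, and the sandwich forces $\log\lcm_{4\le m\le n} P_m=\log\lcm_{d\in\mathcal{M}_\mathbf{s}(n)}\Phi_d+O(n)$. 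Feeding the same identity $\prod_{d\in\mathcal{M}_\mathbf{s}(n)}\Phi_d=R\cdot\lcm_{d\in\mathcal{M}_\mathbf{s}(n)}\Phi_d$ back in and using Lemma~\ref{lem:logPhin} together with $\#\mathcal{M}_\mathbf{s}(n)=O(n)$, I obtain $\log\lcm_{d\in\mathcal{M}_\mathbf{s}(n)}\Phi_d=\sum_{d\in\mathcal{M}_\mathbf{s}(n)}\log\Phi_d-\log R=\sum_{d\in\mathcal{M}_\mathbf{s}(n)}\varphi(d)\log\alpha+O(n)$.

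It then remains to identify $\mathcal{M}_\mathbf{s}(n)$ with $\mathcal{L}_\mathbf{s}(n)$. I would sort the sets $\mathcal{E}_{s_m}(m)$ by the value $a\in\{1,2,3,6\}$ of the occurring $\mathcal{D}_a$, splitting $m$ into its four residues modulo $4$ and writing the argument of each $\mathcal{D}_a$ as $h$. For instance $\mathcal{D}_3$ arises only from $\mathcal{E}_+(4k)=\mathcal{D}_3(2k)$ and from $\mathcal{E}_-(4k+2)=\mathcal{D}_3(2k+1)$; translating the two sign requirements $s_{4k}=+1$ and $s_{4k+2}=-1$ through $2h=4k$ and $2h=4k+2$, respectively, both collapse to the single condition $s_{2h}=(-1)^h$, which is exactly the defining condition of $\mathcal{K}_{3,\mathbf{s}}(n)$. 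The cases $a=6,1,2$ are handled identically and recover $\mathcal{K}_{6,\mathbf{s}}$, $\mathcal{K}_{1,\mathbf{s}}$, and $\mathcal{K}_{2,\mathbf{s}}$ (the unions $\mathcal{D}_a(2k)\cup\mathcal{D}_a(2k+1)$ appearing in $\mathcal{E}_\pm(4k+1)$ and $\mathcal{E}_\pm(4k+3)$ produce the two alternatives joined by $\lor$). The only mismatches between $\mathcal{M}_\mathbf{s}(n)$ and $\mathcal{L}_\mathbf{s}(n)$ are then $O(1)$ boundary values of $h$ near $n/2$ and near the start of the range; since $\sum_{d\in\mathcal{D}_a(h)}\varphi(d)\le\sum_{d\,\mid\, ah}\varphi(d)=ah=O(n)$, these alter the right-hand side of~\eqref{equ:logellsn} by only $O(n)$, which finishes the proof.

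I expect the main obstacle to be this final bookkeeping step: tracking simultaneously the residue of $m$ modulo $4$, the parity of $h$, and the $(-1)^h$ twists, and checking that the index ranges of $\mathcal{M}_\mathbf{s}(n)$ and $\mathcal{L}_\mathbf{s}(n)$ agree up to boundary terms. Everything preceding it is a fairly mechanical application of Lemmas~\ref{lem:Phiprodlcm} and~\ref{lem:logPhin}.
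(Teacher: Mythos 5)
Your proof is correct, and while it shares the paper's overall skeleton---replace $\mathcal{L}_\mathbf{s}(n)$ by $\mathcal{M}_\mathbf{s}(n)=\bigcup_{4\le k\le n}\mathcal{E}_{s_k}(k)$ (the paper calls this set $\mathcal{L}'_\mathbf{s}(n)$), reduce $\log\ell_\mathbf{s}(n)$ to $\sum_{d\in\mathcal{M}_\mathbf{s}(n)}\log\Phi_d$ via Lemma~\ref{lem:Phiprodlcm}, and conclude with Lemma~\ref{lem:logPhin}---your middle step runs on a genuinely different mechanism, and it buys something real. The paper factors every integer into its $3n$-smooth part $m^{(\le)}$ and rough part $m^{(>)}$, proves the exact identity \eqref{equ:logellsn_1} for the rough parts, and discards both smooth parts as $\mathrm{e}^{O(n)}$ in \eqref{equ:logellsn_2} and \eqref{equ:logellsn_3}. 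You instead never split by prime size: writing $P_m:=\prod_{d\in\mathcal{E}_{s_m}(m)}\Phi_d$, you use $\lcm_m P_m\mid\ell_\mathbf{s}(n)\mid 5\lcm_m P_m$ to dispose of the factors $e_{s_m}(m)$, and then the two-sided sandwich
\begin{equation*}
\lcm_{d\,\in\,\mathcal{M}_\mathbf{s}(n)}\Phi_d \;\Big|\; \lcm_{4\,\le\,m\,\le\,n}P_m \;\Big|\; \prod_{d\,\in\,\mathcal{M}_\mathbf{s}(n)}\Phi_d \;=\; R\cdot \lcm_{d\,\in\,\mathcal{M}_\mathbf{s}(n)}\Phi_d ,
\end{equation*}
so that only $\log R=O(n)$ from Lemma~\ref{lem:Phiprodlcm} is ever invoked. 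The difference is not cosmetic: the smooth-part claims \eqref{equ:logellsn_2} and \eqref{equ:logellsn_3} do not follow from Lemma~\ref{lem:Phiprodlcm} as cited, because the $3n$-smooth part of $\lcm_{d}\Phi_d$ (and of $\ell_\mathbf{s}(n)$) contains the factor $p^{\nu_p(\Phi_{z(p)})}$ for every prime $p\le 3n$ with $z(p)\in\mathcal{M}_\mathbf{s}(n)$, and $\nu_p(\Phi_{z(p)})=\nu_p(F_{z(p)})$ admits no unconditional bound beyond the trivial one (this is a Wall--Sun--Sun-type open problem); Lemma~\ref{lem:Phiprodlcm} deliberately bounds only the ratio $R$, in which that exponent is absorbed by the maximum. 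In your sandwich those prime powers sit on both sides of the divisibility chain and cancel, so the issue never arises. A similar remark applies to the endgame: bounding the mismatch between $\mathcal{M}_\mathbf{s}(n)$ and $\mathcal{L}_\mathbf{s}(n)$ by $O(1)$ whole boundary sets $\mathcal{D}_a(h)$, each with $\sum_{d\in\mathcal{D}_a(h)}\varphi(d)\le ah=O(n)$, is the right accounting, whereas the paper's assertion that the symmetric difference is contained in $[1,4]\cup\big[(n-1)/2,(n+1)/2\big]$ is not literally accurate (a boundary set $\mathcal{D}_a(h)$ with $h\approx(n+1)/2$ also contributes divisors far below $(n-1)/2$), though it is harmless for exactly the reason your estimate supplies. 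In short: same strategy and same two key lemmas, but your execution of the two delicate steps is the more robust one.
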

\begin{proof}
From the definition of $\mathcal{E}_\pm(k)$, it follows that the symmetric difference of $\mathcal{L}_\mathbf{s}(n)$ and
\begin{equation*}
\mathcal{L}_\mathbf{s}^\prime(n) := \bigcup_{4 \,\leq\, k \,\leq\, n} \mathcal{E}_{s_k}(k)
\end{equation*}
is contained in $[1,4] \cup \big[(n - 1)/2, (n + 1)/2\big]$.
Therefore, it suffices to prove~\eqref{equ:logellsn} with $\mathcal{L}_\mathbf{s}^\prime(n)$ in place of $\mathcal{L}_\mathbf{s}(n)$.

For every integer $m \geq 1$, write $m = m^{(\leq)} \cdot m^{(>)}$, where $m^{(\leq)}$, respectively $m^{(>)}$, is a positive integer having all prime factors not exceeding $3n$, respectively greater than $3n$.
Note that for every integer $k \in [4, n]$ we have $\mathcal{E}_\pm(k) \subseteq [1, 3n]$, and consequently $\mathcal{L}_{\mathbf{s}}^\prime(n) \subseteq [1, 3n]$.

On the one hand, from Lemma~\ref{lem:shifted-product} and Lemma~\ref{lem:Phiprodlcm} (with $x = 3n$), we have that
\begin{align}\label{equ:logellsn_1}
\ell_\mathbf{s}^{(>)}(n) &= \lcm_{\substack{4 \,\leq\, k \,\leq\, n}} (L_k + s_k)^{(>)} = \lcm_{\substack{4 \,\leq\, k \,\leq\, n}} \prod_{d \,\in\, \mathcal{E}_{s_k}\!(k)} \Phi_d^{(>)} \\
 &= \lcm_{\substack{4 \,\leq\, k \,\leq\, n}} \; \lcm_{d \,\in\, \mathcal{E}_{s_k}\!(k)} \Phi_d^{(>)} = \lcm_{d \,\in\, \mathcal{L}_\mathbf{s}^\prime(n)} \Phi_d^{(>)} = \prod_{d \,\in\, \mathcal{L}_\mathbf{s}^\prime(n)} \Phi_d^{(>)} , \nonumber 
\end{align}
and
\begin{equation}\label{equ:logellsn_2}
\prod_{d \,\in\, \mathcal{L}_\mathbf{s}^\prime(n)} \Phi_d^{(\leq)} = \mathrm{e}^{O(n)} .
\end{equation}

On the other hand, again from Lemma~\ref{lem:shifted-product} and Lemma~\ref{lem:Phiprodlcm}, we get that
\begin{equation}\label{equ:logellsn_3}
\ell_\mathbf{s}^{(\leq)}(n) = \lcm_{\substack{4 \,\leq\, k \,\leq\, n}} (L_k + s_k)^{(\leq)} = \lcm_{\substack{4 \,\leq\, k \,\leq\, n}} \prod_{d \,\in\, \mathcal{E}_{s_k}\!(k)} \Phi_d^{(\leq)} = \lcm_{\substack{4 \,\leq\, k \,\leq\, n}} R_k = \mathrm{e}^{O(n)} ,
\end{equation}
where each $R_k$ is a positive integer such that $p \leq 3n$ and $\nu_p(R_k) \leq 2\log(3n) / \log p$ for every prime number $p$ dividing $R_k$.

Therefore, from~\eqref{equ:logellsn_1}, \eqref{equ:logellsn_2}, and~\eqref{equ:logellsn_3}, we find that
\begin{equation*}
\log \ell_\mathbf{s}(n) = \log\!\bigg(\prod_{d \,\in\, \mathcal{L}_\mathbf{s}^\prime(n)} \Phi_d\bigg) + O(n) = \sum_{d \,\in\, \mathcal{L}_\mathbf{s}^\prime(n)} \varphi(d)\log \alpha + O\!\left(\#\mathcal{L}_\mathbf{s}^\prime(n)\right) + O(n) ,
\end{equation*}
where we used Lemma~\ref{lem:logPhin}.
Since $\#\mathcal{L}_\mathbf{s}^\prime(n) \leq 3n$, the claim follows.
\end{proof}

For all integers $r, m \geq 1$ and for every $x > 1$, let us define the arithmetic progression
\begin{equation*}
\mathcal{A}_{r,m}(x) := \big\{n \leq x : n \equiv r \!\!\!\pmod m\big\} ,
\end{equation*}
and put
\begin{equation*}
c_{r, m} := \frac1{m} \prod_{\substack{p \,\mid\, m \\ \!\!p \,\mid\, r}} \left(1 + \frac1{p}\right)^{-1} \prod_{\substack{p \,\mid\, m \\ \!\! p \,\nmid\, r}} \left(1 - \frac1{p^2}\right)^{-1} .
\end{equation*}
Also, for every integer $q \geq 1$ and for all $z, w \in [0, 1]$, let us define
\begin{equation*}
T_q(z, w) := \begin{cases}
1 & \text{ if $z = 0$;}  \\
\frac{(1 - z)\Li_2(z)}{z} & \text{ if $z > 0$ and $w = 1$;} \\
\frac{(1 - zw)\Li_2(z^q w)}{q^2zw} + \frac{1 - z}{z} \sum_{j \,=\, 1}^{q - 1} z^j \bigg(\frac1{j^2} + \frac{\Li_2(z^q w; j/q)}{q^2} \bigg) & \text{ if $z > 0$ and $w < 1$.}
\end{cases}
\end{equation*}
We need some asymptotic formulas for sums of the Euler totient function over $\mathcal{A}_{r, m}(x)$.

\begin{lemma}\label{lem:phisum}
Let $r,m,q \geq 1$ be integers, and let $z, w \in {[0,1]}$.
Then we have
\begin{equation*}
\sum_{n \,\in\, \mathcal{A}_{r, m}(x)} \varphi(n) \left(1 - z^{\left\lfloor \tfrac{x}{n} \right\rfloor} w^{\left\lfloor \tfrac{x}{qn} \right\rfloor} \right) = \frac{3}{\pi^2} c_{r,m} T_q(z, w) x^2 + O_{r,m}\!\left(x (\log x)^2\right) ,
\end{equation*}
for all $x \geq 2$, where for $z = 0$ the error term can be improved to $O_{r,m}(x \log x)$.
\end{lemma}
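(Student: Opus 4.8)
The plan is to split the sum into two pieces. Writing $S(y) := \sum_{n \le y,\ n \equiv r\,(m)} \varphi(n)$, I would first record the arithmetic-progression analogue of the classical totient sum,
\[
S(y) = \frac{3}{\pi^2}\, c_{r,m}\, y^2 + O_{r,m}(y \log y) \qquad (y \ge 2),
\]
and then observe that when $z = 0$ every exponent $\lfloor x/n \rfloor$ with $n \le x$ is at least $1$, so the weight vanishes and the left-hand side is exactly $S(x)$; since $T_q(0,w) = 1$ this already yields the claim with the improved error $O_{r,m}(x\log x)$. For $z > 0$ I would write the left-hand side as $S(x) - \Sigma$, where $\Sigma := \sum_{n \in \mathcal{A}_{r,m}(x)} \varphi(n)\, z^{\lfloor x/n\rfloor} w^{\lfloor x/(qn)\rfloor}$, and reduce everything to an asymptotic evaluation of $\Sigma$.

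To prove the displayed formula for $S(y)$ I would use $\varphi = \mu \ast \mathrm{id}$ to get $S(y) = \sum_{d \ge 1}\mu(d)\sum_{e \le y/d,\ de \equiv r\,(m)} e$. For fixed $d$ the inner sum runs over an arithmetic progression of integers with modulus $m/(d,m)$, nonempty precisely when $(d,m)\mid r$, with main term $\tfrac{(d,m)}{2m}(y/d)^2$ and error $O(y/d)$. Summing over $d \le y$ contributes the error $O_{r,m}(y\log y)$, while the leading coefficient is $\tfrac{1}{2m}\sum_{(d,m)\mid r}\mu(d)(d,m)/d^2$; a short Euler-product computation over the primes $p \mid m$ (distinguishing $p \mid r$ from $p \nmid r$), together with $\tfrac{3}{\pi^2} = \tfrac12\prod_p(1-p^{-2})$, identifies this coefficient with $\tfrac{3}{\pi^2} c_{r,m}$ and so pins down the constant.

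For $\Sigma$ the key elementary fact is that $\lfloor x/(qn)\rfloor$ is constant on each level set $\{n : \lfloor x/n\rfloor = k\}$: indeed $x/n \in [k,k+1)$ forces $x/(qn) \in [k/q,(k+1)/q)$, and writing $k = qj+i$ with $0 \le i \le q-1$ shows this interval lies inside $[j,j+1)$, whence $\lfloor x/(qn)\rfloor = \lfloor k/q\rfloor$. Grouping the $n$ by $k = \lfloor x/n\rfloor$ and using $\sum_{x/(k+1) < n \le x/k,\ n\equiv r\,(m)}\varphi(n) = S(x/k) - S(x/(k+1))$, the $S$-asymptotic turns $\Sigma$ into $\tfrac{3}{\pi^2}c_{r,m}\, x^2 \sum_{k\ge1} z^k w^{\lfloor k/q\rfloor}\bigl(k^{-2} - (k+1)^{-2}\bigr)$, up to the error $\sum_{k \le x}(x/k)\log(x/k) = O_{r,m}(x(\log x)^2)$, the extension of the $k$-sum to infinity costing only $O(1)$. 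Hence the left-hand side equals $\tfrac{3}{\pi^2}c_{r,m}\, x^2\bigl(1 - G_q(z,w)\bigr) + O_{r,m}(x(\log x)^2)$, where $G_q(z,w)$ denotes that series.

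It remains to verify $1 - G_q(z,w) = T_q(z,w)$, which I expect to be the main obstacle. I would apply summation by parts to write $G_q = z + \sum_{k \ge 2}(a_k - a_{k-1})k^{-2}$ with $a_k := z^k w^{\lfloor k/q\rfloor}$, then split the telescoped differences according to whether $q \mid k$: when $q \nmid k$ one gets $-(1-z)z^{k-1}w^{\lfloor k/q\rfloor}$, and when $k = qj$ one gets $-(1-zw)z^{qj-1}w^{j-1}$. The divisible part sums to $\tfrac{(1-zw)\Li_2(z^q w)}{q^2 zw}$, and reindexing the non-divisible part by $k = qj+i$ with $1 \le i \le q-1$ produces $\tfrac{1-z}{z}\sum_{i=1}^{q-1} z^i\bigl(i^{-2} + q^{-2}\Li_2(z^q w; i/q)\bigr)$; after the stray constant terms collapse to $1$ this is exactly $T_q(z,w)$ in the case $w < 1$, while the cases $w = 1$ and $z = 0$ reduce to specializations of the same computation. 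The genuinely delicate points are the uniformity of the error in $z,w \in [0,1]$ (which is fine because both are bounded by $1$, so the $k$-sum of errors is dominated by the extreme case $z = w = 1$) and the bookkeeping in the reindexing that brings out the generalized dilogarithm $\Li_2(\cdot\,;\cdot)$; everything else is routine.
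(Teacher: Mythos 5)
Your proposal is correct and follows essentially the same route as the paper's proof: group $n$ by $k = \lfloor x/n \rfloor$ (using the same key observation that $\lfloor x/(qn) \rfloor = \lfloor k/q \rfloor$ on each such group), feed in the arithmetic-progression totient asymptotic via summation by parts, and evaluate the resulting series by splitting $k$ into residue classes modulo $q$, which is exactly how the paper arrives at $T_q(z,w)$ (your $1 - G_q(z,w)$ coincides term-by-term with the paper's $U_q(z,w)$). The only difference is one of self-containedness: you prove from scratch (via $\varphi = \mu \ast \mathrm{id}$ and an Euler-product computation) the asymptotic for $S_{r,m}$ and the $z=0$, $w=1$ specializations, whereas the paper simply cites these from \cite[Lemmas~3.4--3.5]{Sanna21a}.
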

\begin{proof}
For $z = 0$, or $z > 0$ and $w = 1$, see~\cite[Lemma~3.4, Lemma~3.5]{Sanna21a}.
Suppose that $z > 0$ and $w < 1$, and let
\begin{equation*}
S_{r,m}(x) := \sum_{n \,\in\, \mathcal{A}_{r, m}(x)} \varphi(n) = \frac{3}{\pi^2} c_{r,m} x^2 + O_{r,m}(x \log x) ,
\end{equation*}
for all $x \geq 2$.
For every integer $k \geq 1$, we have that $\lfloor x / n \rfloor = k$ if and only if $x / (k + 1) < n \leq x / k$, and in such a case it holds $\lfloor x / (qn) \rfloor = \lfloor k / q \rfloor$.
Therefore, we have
\begin{align*}
\sum_{n \,\in\, \mathcal{A}_{r, m}(x)} & \varphi(n) \left(1 - z^{\left\lfloor \tfrac{x}{n} \right\rfloor} w^{\left\lfloor \tfrac{x}{qn} \right\rfloor} \right) = \sum_{k \,\leq\, x} \left(1 - z^k w^{\left\lfloor \tfrac{k}{q} \right\rfloor}\right) \! \left(S_{r,m}\!\left(\frac{x}{k}\right) - S_{r,m}\!\left(\frac{x}{k + 1}\right)\right) \\
 &= \sum_{k \,\leq\, x} \left(\left(1 - z^k w^{\left\lfloor \tfrac{k}{q} \right\rfloor}\right) - \left(1 - z^{k-1} w^{\left\lfloor \tfrac{k-1}{q} \right\rfloor}\right)\right) S_{r,m}\!\left(\frac{x}{k}\right) \\
 &= \sum_{k \,\leq\, x} z^{k - 1} \! \left(w^{\left\lfloor \tfrac{k - 1}{q} \right\rfloor} - z w^{\left\lfloor \tfrac{k}{q} \right\rfloor}\right) \! \left(\frac{3}{\pi^2} \cdot \frac{c_{r,m} x^2}{k^2} + O_{r,m}\!\left(\frac{x \log x}{k} \right)\right) \\
 &= \frac{3 c_{r,m}}{\pi^2}U_q(z, w) x^2 + O_{r,m}\bigg(\sum_{k \,>\, x} \frac{x^2}{k^2} \bigg) + O_{r,m}\bigg(\sum_{k \,\leq\, x} \frac{x \log x}{k}\bigg) \\
 &= \frac{3 c_{r,m}}{\pi^2}U_q(z, w) x^2 + O_{r,m}\!\left(x (\log x)^2\right) ,
\end{align*}
where
\begin{align*}
U_q(z, w) &:= \sum_{k \,=\, 1}^\infty \frac{z^{k - 1}}{k^2} \! \left(w^{\left\lfloor \tfrac{k - 1}{q} \right\rfloor} - z w^{\left\lfloor \tfrac{k}{q} \right\rfloor}\right) \\
 &= (1 - z) \sum_{k \,=\, 1}^{q - 1} \frac{z^{k - 1}}{k^2} + \sum_{j \,=\, 0}^{q - 1} \sum_{h \,=\, 1}^\infty \frac{z^{qh + j - 1} w^h}{(qh + j)^2} \begin{cases} (w^{-1} - z) & \text{ if } j = 0; \\ (1 - z) & \text{ if } j \geq 1; \end{cases} \\
 &= \frac{(1 - zw)\Li_2(z^q w)}{q^2zw} + \frac{1 - z}{z} \sum_{j \,=\, 1}^{q - 1} z^j \! \left( \frac1{j^2} + \frac{\Li_2(z^q w; j / q)}{q^2} \right) ,
\end{align*}
as desired.
\end{proof}

\section{Proof of Theorem~\ref{thm:periodic}}

For all integers $a, r, m \geq 1$, let $\mathcal{T}_{a,r,m}$ be the set of $t \in \{1, \dots, m\}$ such that there exists an integer $u \geq 1$ satisfying $tu \equiv r \pmod m$ and $(a, u) = 1$.
Moreover, for each $t \in \mathcal{T}_{a,r,m}$ let $u_{a,r,m}(t)$ be the minimum of the possible integers $u$.

\begin{lemma}\label{lem:unionDan}
Let $a, r, m \geq 1$ be integers.
Then we have
\begin{equation*}
\bigcup_{n \,\in\, \mathcal{A}_{r,m}(x)} \mathcal{D}_a(n) = \bigcup_{t \,\in\, \mathcal{T}_{a,r,m}} \mathcal{A}_{at, am}\!\left(\frac{ax}{u_{a,r,m}(t)}\right) ,
\end{equation*}
for every $x > 1$.
\end{lemma}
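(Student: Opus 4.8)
The plan is to first rewrite $\mathcal{D}_a(n)$ in a form adapted to the union. Setting $u := an/d$, a divisor $d$ lies in $\mathcal{D}_a(n)$ precisely when $u \mid an$ and $(u,a) = 1$; since $(u,a) = 1$ together with $u \mid an$ forces $u \mid n$, this yields the bijective parametrization
\[
\mathcal{D}_a(n) = \{\, an/u : u \mid n,\ (u,a) = 1 \,\}.
\]
In particular every element of $\mathcal{D}_a(n)$ is a multiple of $a$, and writing $d = an/u = a(n/u)$ I would track the quantity $v := d/a = n/u$ throughout. The whole identity then becomes a statement about which multiples $d = av$ of $a$ occur on each side, so I would fix an arbitrary positive integer $d$ and prove the two membership conditions are equivalent.

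Next I would unfold both sides into explicit conditions on $d$. For the left-hand side, the parametrization together with $n = u\cdot(d/a)$ shows that $d \in \bigcup_{n \in \mathcal{A}_{r,m}(x)} \mathcal{D}_a(n)$ if and only if $a \mid d$ and there is an integer $u \geq 1$ with $(u,a) = 1$ such that, with $v = d/a$, one has $uv \leq x$ and $uv \equiv r \pmod m$. For the right-hand side, the congruence $d \equiv at \pmod{am}$ with $t \in \{1,\dots,m\}$ is equivalent to $a \mid d$ together with $v \equiv t \pmod m$; since the residue of $v$ modulo $m$ taken in $\{1,\dots,m\}$ is uniquely determined, there is at most one admissible value of $t$, which I denote $t(v)$. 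Thus $d$ lies in the right-hand union if and only if $a \mid d$, $t(v) \in \mathcal{T}_{a,r,m}$, and $v \leq x / u_{a,r,m}(t(v))$.

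It then remains to prove the equivalence of these two conditions for a fixed $d$ with $a \mid d$, writing $v = d/a$ and $t = t(v)$. For the forward implication, a witness $u$ of the left-hand condition satisfies $tu \equiv vu \equiv r \pmod m$ and $(u,a) = 1$, so $t \in \mathcal{T}_{a,r,m}$ and, by minimality, $u_{a,r,m}(t) \leq u$; hence $v \leq x/u \leq x/u_{a,r,m}(t)$. For the converse, the minimal witness $u = u_{a,r,m}(t)$ satisfies $(u,a) = 1$ and $uv \equiv ut \equiv r \pmod m$, while $v \leq x/u_{a,r,m}(t)$ gives $uv \leq x$, so $u$ witnesses the left-hand condition. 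The main (and essentially the only) obstacle is bookkeeping: keeping the bijection $d \leftrightarrow (u,v)$ straight, reducing the two-sided congruence $d \equiv at \pmod{am}$ to a condition on $v = d/a$ together with $a \mid d$, and invoking the defining minimality of $u_{a,r,m}(t)$ in exactly the two places above. No arithmetic input beyond the elementary implication that $(u,a)=1$ and $u \mid an$ give $u \mid n$ is required.
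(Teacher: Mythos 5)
Your proposal is correct and follows essentially the same route as the paper: both arguments rest on the substitution $u = an/d$, $v = d/a$, the observation that $d \equiv at \pmod{am}$ amounts to $a \mid d$ together with $d/a \equiv t \pmod m$, and the two uses of the minimality of $u_{a,r,m}(t)$ (as a bound in the forward direction, as the witness in the converse). The only difference is organizational — you fix $d$ and prove equivalence of unfolded membership conditions, while the paper chases elements through a double inclusion — and your explicit justification that $a \mid d$ via the parametrization of $\mathcal{D}_a(n)$ is a point the paper leaves implicit.
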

\begin{proof}
On the one hand, suppose that $d \in \mathcal{D}_a(n)$ for some $n \in \mathcal{A}_{r,m}(x)$.
In~particular, $d \mid an$ and $a \mid d$.
Put $u := an / d$ and let $t \in \{1, \dots, m\}$ be such that $d / a \equiv t \pmod m$.
Then $tu \equiv (d/a)(an/d) \equiv n \equiv r \pmod m$ and $(a, u) = 1$, so that $t \in \mathcal{T}_{a,r,m}$.
Moreover, $d \equiv at \pmod {am}$ and $d = an / u \leq ax / u_{a,r,m}(t)$, so that $d \in \mathcal{A}_{at,am}\big(ax / u_{a,r,m}(t))$.

On the other hand, suppose that $d \in \mathcal{A}_{at,am}\big(ax / u_{a,r,m}(t))$ for some $t \in \mathcal{T}_{a,r,m}$.
Since $d \equiv at \pmod {am}$, we have that $a \mid d$ and $d / a \equiv t \pmod m$.
Let $u := u_{a,r,m}(t)$ and $n := (d / a)u$.
Then $d \mid an$ and $(an / d, a) = (u, a) = 1$, so that $d \in \mathcal{D}_a(n)$.
Moreover, $n \equiv (d/a) u \equiv t u \equiv r \pmod m$ and $n \leq ((ax / u) / a) u = x$, so that $n \in \mathcal{A}_{r,m}(x)$.
\end{proof}

Let $\mathbf{s} = (s_n)_{n \geq 1}$ be a periodic sequence in $\{-1, +1\}$, and let $m$ be the length of its period.
Then, for each $a \in \{1,2,3,6\}$, there exists $\mathcal{R}_{a, \mathbf{s}} \subseteq \{1, \dots, 2m\}$ such that
\begin{equation*}
\mathcal{K}_{a,\mathbf{s}}(n) = \bigcup_{r \,\in\, \mathcal{R}_{a, \mathbf{s}}} \mathcal{A}_{r, 2m}(n / 2) ,
\end{equation*}
for all integers $n \geq 4$.
From Lemma~\ref{lem:unionDan} and the fact that arithmetic progressions modulo $m$, $2m$, and $3m$ can be written as unions of arithmetic progressions modulo $6m$, it follows that there exist $\mathcal{R}_\mathbf{s} \subseteq \{1, \dots, 6m\}$ and positive rational numbers $(q_r)_{r \in \mathcal{R}_\mathbf{s}}$ such that
\begin{equation*}
\mathcal{L}_\mathbf{s}(n) = \bigcup_{a \,\in\, \{1,2,3,6\}} \; \bigcup_{h \,\in\, \mathcal{K}_{a,\mathbf{s}}(n)} \mathcal{D}_a(h) = \bigcup_{r \,\in\, \mathcal{R}_\mathbf{s}} \mathcal{A}_{r, 6m}(q_r n) .
\end{equation*}
Now Lemma~\ref{lem:logellsn_asymp} and Lemma~\ref{lem:phisum} yield that
\begin{equation*}
\log \ell_\mathbf{s}(n) = \sum_{r \,\in\, \mathcal{R}_\mathbf{s}} \; \sum_{d \,\in\, \mathcal{A}_{r, 6m}(q_r n)} \varphi(d) \log \alpha + O(n) = B_\mathbf{s} \cdot \frac{\log \alpha}{\pi^2} \cdot n^2 + O_\mathbf{s}(n \log n) ,
\end{equation*}
for all integers $n \geq 4$, where
\begin{equation*}
B_\mathbf{s} := 3 \sum_{r \,\in\, \mathcal{R}_\mathbf{s}} c_{r, 6m} q_r^2
\end{equation*}
is a positive rational number, which is effectively computable in terms of $s_1, \dots, s_m$.

The proof is complete.

\section{Proof of Theorem~\ref{thm:random}}

For all integers $a, d \geq 1$ and for every $x > 1$, let us define
\begin{equation*}
\mathcal{H}_a(d) := \big\{h \in \mathbb{N} : d \in \mathcal{D}_a(h)\big\} \quad\text{ and }\quad \mathcal{H}_a(d;x) := \mathcal{H}_a(d) \cap [1, x] .
\end{equation*}
We need the following easy result.

\begin{lemma}\label{lem:Had}
For all integers $a,b,d \geq 1$ and for every $x > 1$, we have that:
\begin{enumerate}
\item If $a \mid d$ then $\mathcal{H}_a(d) = \big\{\tfrac{d}{a} v : v \in \mathbb{N},\; (a, v) = 1\big\}$, otherwise $\mathcal{H}_a(d) = \varnothing$.
\item If $a \mid d$ then $\#\mathcal{H}_a(d; x) = \sum_{b \,\mid\, a} \mu(b) \!\left\lfloor \tfrac{ax}{bd} \right\rfloor$, otherwise $\#\mathcal{H}_a(d; x) = 0$.
\item If $a \neq b$ then $\mathcal{H}_a(d) \cap \mathcal{H}_b(d) = \varnothing$.
\end{enumerate}
\end{lemma}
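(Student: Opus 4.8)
The plan is to read all three parts straight off the definition, rewriting the membership condition $h \in \mathcal{H}_a(d)$ as the pair of requirements $d \mid ah$ and $(ah/d, a) = 1$ coming from \eqref{equ:Dan}. I would begin with part (i), since the other two parts lean on it. If $d \mid ah$ and $(ah/d, a) = 1$, then setting $u := ah/d$ gives $du = ah$, so $a \mid du$; because $(a, u) = 1$, this forces $a \mid d$, and hence $\mathcal{H}_a(d) = \varnothing$ whenever $a \nmid d$. Conversely, when $a \mid d$ I would parametrize $h$ by writing $h = (d/a) v$ with $v \in \mathbb{N}$: the divisibility $d \mid ah$ then holds automatically, $ah/d = v$, and the coprimality condition becomes exactly $(a, v) = 1$. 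This gives the stated description $\mathcal{H}_a(d) = \{(d/a) v : v \in \mathbb{N},\, (a, v) = 1\}$.

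Part (ii) is then a direct counting consequence of part (i). When $a \mid d$, the elements of $\mathcal{H}_a(d; x)$ are the $h = (d/a) v \leq x$ with $(a, v) = 1$, that is, the $v \in \mathbb{N}$ with $(a, v) = 1$ and $v \leq ax/d$. Counting such $v$ by the standard Möbius inclusion--exclusion over the divisors of $a$ yields $\sum_{b \,\mid\, a} \mu(b) \lfloor ax/(bd) \rfloor$, which is the claimed formula; the case $a \nmid d$ gives $0$ by part (i).

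For part (iii) the cleanest route is to show that, for a fixed pair $(d, h)$, the integer $a$ with $d \in \mathcal{D}_a(h)$ is uniquely determined, so that no $h$ can simultaneously lie in $\mathcal{H}_a(d)$ and $\mathcal{H}_b(d)$ when $a \neq b$. Concretely, if $d \in \mathcal{D}_a(h)$ then, with $u := ah/d$, the relation $ah = du$ together with $(a, u) = 1$ forces both $a \mid d$ and $u \mid h$; writing $g := d/a = h/u$, one sees that $g$ is a common divisor of $d$ and $h$ satisfying $(d/g, h/g) = (a, u) = 1$, so necessarily $g = \gcd(d, h)$ and $a = d/\gcd(d, h)$. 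Thus $a$ is a function of $d$ and $h$ alone, and part (iii) follows immediately.

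I expect parts (i) and (ii) to be entirely routine, and the only place needing a genuine idea is part (iii): the crux is the observation that membership $d \in \mathcal{D}_a(h)$ pins down $a = d/\gcd(d, h)$. Should one prefer to avoid the $\gcd$ identification, an equivalent prime-by-prime argument works just as well: membership in $\mathcal{H}_a(d)$ fixes $\nu_p(h) = \nu_p(d) - \nu_p(a)$ for every $p \mid a$ and forces $\nu_p(h) \geq \nu_p(d)$ for every $p \nmid a$; comparing these constraints for $a \neq b$ at a prime where $\nu_p(a) \neq \nu_p(b)$ produces a contradiction, which is the anticipated main (though mild) obstacle.
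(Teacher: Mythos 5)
Your proposal is correct and takes essentially the same approach as the paper: part (i) is read off the definition \eqref{equ:Dan}, part (ii) follows from (i) by M\"obius inclusion--exclusion, and part (iii) is an elementary coprimality argument. The only cosmetic difference is in (iii), where the paper equates the two representations $h = dv/a = dw/b$ from (i) and gets $a = b$ from $bv = aw$ by mutual divisibility, while you identify $a = d/\gcd(d,h)$ --- the same coprimality reasoning, packaged slightly differently.
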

\begin{proof}
The claim (i) follows easily from~\eqref{equ:Dan}, while (ii) is a consequence of (i) and the inclusion-exclusion principle.
Regarding (iii), suppose that $h \in \mathcal{H}_a(d) \cap \mathcal{H}_b(d)$.
Then it follows from (i) that $h = dv / a = dw / b$, for some integers $v,w \geq 1$ such that $(a, v) = 1$ and $(b, w) = 1$.
Hence, $bv = aw$ and by the conditions of coprimality it follows that $a = b$.
\end{proof}

In what follows, let $\mathbf{s} = (s_n)_{n \geq 1}$ be a sequence of independent and uniformly distributed random variables in $\{-1, +1\}$.
Furthermore, define
\begin{equation}\label{equ:Pdn}
P(d, n) := \begin{cases}
2 \!\left\lfloor \tfrac{n}{2d} \right\rfloor & \text{ if $d \equiv 1, 5 \pmod 6$;} \\
2 \!\left\lfloor \tfrac{n}{d} \right\rfloor & \text{ if $d \equiv 2, 4 \pmod 6$;} \\
\left\lfloor \tfrac{3n}{2d} \right\rfloor + \left\lfloor \tfrac{n}{2d} \right\rfloor & \text{ if $d \equiv 3\phantom{,5} \pmod 6$;} \\
\left\lfloor \tfrac{3n}{d} \right\rfloor + \left\lfloor \tfrac{n}{d} \right\rfloor & \text{ if $d \equiv 0\phantom{,5} \pmod 6$;} \\
\end{cases}
\end{equation}
for all integers $d, n \geq 1$.

\begin{lemma}\label{lem:PdLsn}
We have
\begin{equation*}
\mathbb{P}\big[d \in \mathcal{L}_{\mathbf{s}}(n)\big] = 1 - \big(\tfrac1{2}\big)^{P(d, n)} ,
\end{equation*}
for all integers $n \geq 4$ and $d \geq 12$.
\end{lemma}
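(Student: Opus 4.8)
The plan is to pass to the complementary event and exploit that, by definition, $d\in\mathcal{L}_{\mathbf{s}}(n)$ if and only if there exist $a\in\{1,2,3,6\}$ and $h\in\mathcal{K}_{a,\mathbf{s}}(n)$ with $d\in\mathcal{D}_a(h)$; since $d\in\mathcal{D}_a(h)$ is the same as $h\in\mathcal{H}_a(d)$, the complementary event $d\notin\mathcal{L}_{\mathbf{s}}(n)$ says precisely that for every $a\in\{1,2,3,6\}$ and every $h\in\mathcal{H}_a(d)$ with $h\le n/2$ the defining sign condition of $\mathcal{K}_{a,\mathbf{s}}(n)$ fails. By Lemma~\ref{lem:Had}(i) only the divisors $a\mid d$ can contribute, and the negations of the four conditions read: for $a=1$, $s_{2h-1}=(-1)^{h+1}$ and $s_{2h+1}=(-1)^{h}$; for $a=2$, $s_{2h-1}=(-1)^{h}$ and $s_{2h+1}=(-1)^{h+1}$; for $a=3$, $s_{2h}=(-1)^{h+1}$; and for $a=6$, $s_{2h}=(-1)^{h}$. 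First I would record that each such negation pins down the relevant variable(s) to a single value in $\{-1,+1\}$.

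The key structural observation is a parity splitting: the conditions for $a\in\{1,2\}$ involve only the odd-indexed variables $s_{2h\mp1}$, whereas those for $a\in\{3,6\}$ involve only the even-indexed variables $s_{2h}$. Since these two families of variables are disjoint and the $s_i$ are independent, the complementary event factors as the product of an ``odd'' event and an ``even'' event, and it suffices to compute the probability of each. For the even part, distinct $h$ yield distinct indices $2h$, and by Lemma~\ref{lem:Had}(iii) the sets $\mathcal{H}_3(d)$ and $\mathcal{H}_6(d)$ are disjoint; hence each admissible $h$ fixes a distinct variable $s_{2h}$ to a prescribed value, giving probability $(1/2)^{\#\mathcal{H}_3(d;\,n/2)+\#\mathcal{H}_6(d;\,n/2)}$.

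The delicate point, and the step I expect to be the main obstacle, is the odd part, where a single $h$ constrains the two variables $s_{2h-1}$ and $s_{2h+1}$, so one must rule out collisions between the index sets attached to different $h$. Here I would use that $\mathcal{H}_1(d)\cup\mathcal{H}_2(d)$ equals the set of multiples of $d$ when $d$ is odd and the set of multiples of $d/2$ when $d$ is even; since $d\ge12$ these multiples have consecutive gaps greater than $1$, so no two of them are consecutive integers, and therefore the odd indices $2h\pm1$ are pairwise distinct across all admissible $h$ (the only possible clash, $2h-1=2h'+1$, would force $h'=h-1$). Consequently each admissible $h$ contributes two independent fair constraints on distinct variables, and the odd part has probability $(1/2)^{2(\#\mathcal{H}_1(d;\,n/2)+\#\mathcal{H}_2(d;\,n/2))}$.

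Finally I would multiply the two factors and evaluate the exponent using Lemma~\ref{lem:Had}(ii), which gives $\#\mathcal{H}_a(d;x)=\sum_{b\mid a}\mu(b)\lfloor ax/(bd)\rfloor$ when $a\mid d$ and $0$ otherwise. Splitting into the four residue classes of $d$ modulo $6$ according to which of $1,2,3,6$ divide $d$, a short computation with the floor identities (taking $x=n/2$) matches the total exponent with $P(d,n)$ in each case; for instance, when $d\equiv0\pmod6$ the odd part gives $2\lfloor n/d\rfloor$ and the even part gives $\lfloor 3n/d\rfloor-\lfloor n/d\rfloor$, summing to $\lfloor 3n/d\rfloor+\lfloor n/d\rfloor$. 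This yields $\mathbb{P}[d\notin\mathcal{L}_{\mathbf{s}}(n)]=(1/2)^{P(d,n)}$, and hence the claim.
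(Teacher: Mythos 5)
Your proof is correct, and its overall skeleton matches the paper's: pass to the complementary event, show that the sign constraints attached to distinct pairs $(a,h)$ with $h \in \mathcal{H}_a(d)$ involve pairwise distinct variables $s_i$ (hence the events are mutually independent), and then assemble the exponent from $\#\mathcal{H}_a(d;n/2)$ via Lemma~\ref{lem:Had}(ii), checking the four residue classes of $d$ modulo $6$ against the definition of $P(d,n)$ in~\eqref{equ:Pdn}. Where you genuinely differ is the mechanism for the independence step. The paper proves one uniform disjointness statement: for any two distinct pairs $(a_1,h_1) \neq (a_2,h_2)$, Lemma~\ref{lem:Had} gives that $\big(\tfrac{d}{a_1},\tfrac{d}{a_2}\big)$ divides $h_1 - h_2$ and is at least $d/6 \geq 2$ (this is where $d \geq 12$ enters), so even the full triples $\{2h_i - 1, 2h_i, 2h_i + 1\}$ are pairwise disjoint, and all events factor at once. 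You instead split by the parity of the constrained indices --- $a \in \{1,2\}$ touches only odd-indexed variables, $a \in \{3,6\}$ only even-indexed ones --- which makes the even part trivial (distinct $h$ automatically pin distinct variables $s_{2h}$, no gap estimate needed) and confines the gap argument to the odd part, where you exploit the explicit description of $\mathcal{H}_1(d) \cup \mathcal{H}_2(d)$ as the set of multiples of $d$ (for $d$ odd) or of $d/2$ (for $d$ even). Both mechanisms are valid and lead to the same exponent bookkeeping, which you carry out correctly. Your version is somewhat more case-based but buys a slightly stronger statement, since it only needs $d/2 \geq 2$, i.e.\ $d \geq 4$, rather than $d \geq 12$; the paper's gcd argument is more uniform, handling all pairs $(a_1,a_2)$ simultaneously without any structural description of the sets $\mathcal{H}_a(d)$ beyond Lemma~\ref{lem:Had}(i).
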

\begin{proof}
Let $a_1, a_2 \in \{1,2,3,6\}$ and $h_i \in \mathcal{H}_{a_i}(d)$, for $i=1,2$, with $(a_1, h_1) \neq (a_2, h_2)$.
By Lemma~\ref{lem:Had}(iii) we have that $h_1 \neq h_2$.
Moreover, by Lemma~\ref{lem:Had}(i) and $d \geq 12$, we have that
\begin{equation*}
2 \leq \frac{d}{6} \leq \frac{d}{(a_1, a_2)} = \left(\frac{d}{a_1}, \frac{d}{a_2}\right) \mid h_1 - h_2 .
\end{equation*}
Hence, $|h_1 - h_2| \geq 2$ and consequently 
\begin{equation*}
\{2h_1 - 1, 2h_1, 2h_1 + 1\} \cap \{2h_2 - 1, 2h_2, 2h_2 + 1\} = \varnothing .
\end{equation*}
Therefore, the events $\big(h \notin \mathcal{K}_{a,\mathbf{s}}(n)\big)$, with $a \in \{1,2,3,6\}$ and $h \in \mathcal{H}_a(d)$, are mutually independent.
Moreover, we have
\begin{equation*}
\mathbb{P}\big[h \notin \mathcal{K}_{1,\mathbf{s}}(n)\big] = \mathbb{P}\big[h \notin \mathcal{K}_{2,\mathbf{s}}(n)\big] = \tfrac1{4} \quad\text{ and }\quad \mathbb{P}\big[h \notin \mathcal{K}_{3,\mathbf{s}}(n)\big] = \mathbb{P}\big[h \notin \mathcal{K}_{6,\mathbf{s}}(n)\big] = \tfrac1{2} .
\end{equation*}
Thus it follows that
\begin{align*}
\mathbb{P}\big[d \notin \mathcal{L}_{\mathbf{s}}(n)\big] &= \mathbb{P}\bigg[ \bigwedge_{a \,\in\, \{1, 2, 3, 6\}} \bigwedge_{h \,\in\, \mathcal{K}_{a,\mathbf{s}}(n)} \big(d \notin \mathcal{D}_{a}(h)\big)\bigg] \\
 &= \mathbb{P}\bigg[ \bigwedge_{a \,\in\, \{1, 2, 3, 6\}} \bigwedge_{h \,\in\, \mathcal{H}_a(d; n/2)} \big(h \notin \mathcal{K}_{a,\mathbf{s}}(n)\big)\bigg] \\
 &= \prod_{a \,\in\, \{1, 2, 3, 6\}} \prod_{h \,\in\, \mathcal{H}_a(d; n/2)} \mathbb{P}\big[h \notin \mathcal{K}_{a,\mathbf{s}}(n)\big] \\
 &= \big(\tfrac1{4}\big)^{\#\mathcal{H}_1(d, n/2)} \big(\tfrac1{4}\big)^{\#\mathcal{H}_2(d, n/2)} \big(\tfrac1{2}\big)^{\#\mathcal{H}_3(d, n/2)} \big(\tfrac1{2}\big)^{\#\mathcal{H}_6(d, n/2)} = \big(\tfrac1{2}\big)^{P(d, n)} ,
\end{align*}
where the last equality follows by Lemma~\ref{lem:Had}(ii) and~\eqref{equ:Pdn}.
\end{proof}

We are ready to prove Theorem~\ref{thm:random}.
From Lemma~\ref{lem:logellsn_asymp} and Lemma~\ref{lem:PdLsn}, we have that
\begin{align*}\label{equ:logellsn}
\mathbb{E}\!\left[\frac{\log \ell_\mathbf{s}(n)}{\log \alpha}\right] = \sum_{d \,\leq\, 3n} \varphi(d) \,\mathbb{P}\big[d \in \mathcal{L}_\mathbf{s}(n)\big] + O(n) 
= \sum_{d \,\leq\, 3n} \varphi(d) \left(1 - \big(\tfrac1{2}\big)^{P(d, n)}\right) + O(n) ,
\end{align*}
for every sufficiently large integer $n$.
Let $S$ be the last sum.
Splitting $S$ according to the residue class of $d$ modulo $6$, 
and applying Lemma~\ref{lem:phisum}, we get
\begin{align*}
S &= \sum_{d \,\in\, \mathcal{A}_{1,6}(n/2) \,\cup\, \mathcal{A}_{5,6}(n/2)} \varphi(d) \left(1 - \big(\tfrac1{4}\big)^{\left\lfloor \tfrac{n}{2d} \right\rfloor}\right)
   + \sum_{d \,\in\, \mathcal{A}_{2,6}(n) \,\cup\, \mathcal{A}_{4,6}(n)} \varphi(d) \left(1 - \big(\tfrac1{4}\big)^{\left\lfloor \tfrac{n}{d} \right\rfloor}\right) \\
  &\phantom{=} + \sum_{d \,\in\, \mathcal{A}_{3,6}(3n/2)} \varphi(d) \left(1 - \big(\tfrac1{2}\big)^{\left\lfloor \tfrac{3n}{2d} \right\rfloor + \left\lfloor \tfrac{n}{2d} \right\rfloor}\right)
   + \sum_{d \,\in\, \mathcal{A}_{6,6}(3n)} \varphi(d) \left(1 - \big(\tfrac1{2}\big)^{\left\lfloor \tfrac{3n}{d} \right\rfloor + \left\lfloor \tfrac{n}{d} \right\rfloor}\right) \\
  &= \bigg(\tfrac{9}{8}\Li_2\!\big(\tfrac1{4}) + \tfrac{9}{4}\Li_2\!\big(\tfrac1{4}) + \Big(\tfrac{81}{128} + \tfrac{3}{8}\Li_2\!\big(\tfrac1{16}\big) + \tfrac1{16}\Li_2\!\big(\tfrac1{16}; \tfrac1{3}\big) + \tfrac1{32}\Li_2\!\big(\tfrac1{16}; \tfrac{2}{3}\big) \Big) \\
  &\phantom{=} + \Big(\tfrac{81}{64} + \tfrac{3}{4}\Li_2\!\big(\tfrac1{16}\big) + \tfrac1{8}\Li_2\!\big(\tfrac1{16}; \tfrac1{3}\big) + \tfrac1{16}\Li_2\!\big(\tfrac1{16}; \tfrac{2}{3}\big)\Big)\bigg)\frac{n^2}{\pi^2} + O\!\left(n (\log n)^2 \right) \\
  &= C \cdot \frac{n^2}{\pi^2} + O\!\left(n (\log n)^2 \right) .
\end{align*}
The proof is complete.

\bibliographystyle{amsplain}

\end{document}